\makeatletter \@addtoreset{equation}{section} \makeatother
\newcommand{\sgn}{\mbox{sgn}}
\newtheorem{theorem}{Theorem}[section]
\newtheorem{definition}{Definition}[section]
\newtheorem{lemma}{Lemma}[section]
\newtheorem{remark}{Remark}[section]
\begin{document}	 	
\title{\bf \sc On critical variable-order Kirchhoff type problems with variable singular exponent} 
\author{\bf \sc Jiabin Zuo$^{a}$,	Debajyoti Choudhuri$^b$	and Du\v{s}an D.  Repov\v{s}$^{c,d,e}$\footnote{Corresponding author: dusan.repovs@guest.arnes.si}\\
\small{$^a$School of Mathematics and Information
Science, Guangzhou University, Guangzhou, 510006, China. {\sl zuojiabin88@163.com}}\\
\small{$^b$Department of Mathematics, National Institute of Technology, Rourkela, 769008, Odisha, India. {\sl dc.iit12@gmail.com}}\\
\small{$^c$Faculty of Education, University of Ljubljana,  Ljubljana, 1000, Slovenia. {\sl dusan.repovs@pef.uni-lj.si}}\\
\small{$^d$Faculty of Mathematics and Physics, University of Ljubljana, Ljubljana, 1000, Slovenia. {\sl dusan.repovs@fmf.uni-lj.si}}\\
\small{$^e$Institute of Mathematics, Physics and Mechanics, Ljubljana, 1000, Slovenia. {\sl dusan.repovs@guest.arnes.si}}}
\maketitle
\begin{abstract}
  We establish a continuous embedding $W^{s(\cdot),2}(\Omega)\hookrightarrow L^{\alpha(\cdot)}(\Omega)$, where the variable exponent $\alpha(x)$ can be close to the critical exponent $2_{s}^*(x)=\frac{2N}{N-2\bar{s}(x)}$, with $\bar{s}(x)=s(x,x)$ for	all
$x\in\bar{\Omega}$. Subsequently, this continuous embedding is used to prove the multiplicity of solutions for critical nonlocal degenerate Kirchhoff problems with a variable singular exponent. Moreover, we also obtain the uniform $L^{\infty}$-estimate of these infinite solutions by a  bootstrap argument.
\begin{flushleft}
{\sl Keywords and phrases}: Fractional Laplacian of variable-order; Continuous embedding; Genus; Symmetric mountain pass theorem; Variable singular exponent.\\
{\sl Math. Subj. Classif.}:~35R11, 35J75, 35J60, 46E35.
\end{flushleft}
\end{abstract}
		 
\section{Introduction}

A strong motivation behind the partial differential equation considered in this paper comes from the following problem, studied by Bisci et al. \cite{22}:
\begin{equation}\label{local_problem}
	\left\{\begin{array}{cl}
	&\displaystyle-\mathfrak{M}\left(\int_{\Omega}|\nabla u|^2dx\right)\Delta u=\frac{\lambda}{u^{\gamma}}+u^{2^{*}-1}~~~~\text{in}~\Omega,\\
	&u>0~~~~~~~~~~~~~~~~~~~~~~~~~~~~~~~~~~~~~~~~~~~~~\text{in}~\Omega,\\
		&u=0~~~~~~~~~~~~~~~~~~~~~~~~~~~~~~~~~~~~~~~~~~~~~\text{on}~\partial\Omega.\\
	\end{array}\right.
	\end{equation}

This  is a typical Kirchhoff problem. Here, $\mathfrak{M}:\mathbb{R}_0^+\rightarrow\mathbb{R}_0^+$ is the Kirchhoff function, usually of the  form			
		 \begin{align}\label{kirchhoff_model}\mathfrak{M}(t)&=a+bt^{\theta-1},~a,b\geq 0,~a+b>0,~\theta\geq 1.\end{align}
		In particular, when $\mathfrak{M}(t)\geq c>0$ for some $c$ and all $t\in\mathbb{R}_0^+$, Kirchhoff problem is said to be non-degenerate. On the other hand, when $\mathfrak{M}(0)=0$ and $\mathfrak{M}(t)>0$ for all $t\in\mathbb{R}^+$,   Kirchhoff problem is said to be degenerate.
		
		Recenty, this kind of nonlocal problems have been widely studied. The readers can be referred to
		\cite{17,18,19,	21,20} for various kinds of Kirchhoff problems with $\mathfrak{M}$ as in \eqref{kirchhoff_model}, which are driven by a Laplacian and involve a singular term  $u^{-\gamma}$. Lei et al.~\cite{17}
		obtained the multiplicity of solutions for Kirchhoff problems like \eqref{local_problem}. Liu and Sun~\cite{21} studied a class of singular Kirchhoff problems with a  Hardy potential with the help of the Nehari manifold technique.		
		Furthermore, Liao et al. \cite{19} used the Nehari manifold approach
		 for the subcritical Kirchhoff problem with an additional singular term. A noteworthy mention at this stage is the work due to Gu et al. \cite{18} where the existence and the uniqueness of solution is obtained by a minimization method, to a singular problem involving a non-positive critical nonlinearity.
		
		Following Lei et al. \cite{17},  Liu et al.~\cite{20} obtained two solutions of  Kirchhoff problems with a critical term and a singular term $|x|^{-\beta}u^{-\gamma}$. Barrios et al.~\cite{4} investigated the following equation
		\begin{align}\label{barrios_prob}
		\left\{\begin{array}{cl}
	&(-\Delta)^su=\lambda\frac{f(x)}{u^{\gamma}}+Mu^p~~~~~\text{in}~\Omega,\\
		&u>0~~~~~~~~~~~~~~~~~~~~~~~~~~~~~\text{in}~\Omega,\\
		&u=0~~~~~~~~~~~~~~~~~~~~~~~~~~~~~\text{in}~\mathbb{R}^N\setminus\Omega\\
	\end{array}\right.
		\end{align}
		with $f$ a nonnegative function, $p>1$, and $M\in\{0,1\}$.
		They studied  the existence and multiplicity of solutions by employing the  Sattinger method and the sub/supersolution technique. Canino et al.~\cite{Canino2017}  generalized the result by Chen et al. \cite[ Section 3]{4'} to the case of $p$-fractional Laplacian $(-\Delta_p)^s$. We draw the attention of the reader to
		\cite{2,Sekhar2020}
		(not restricted to only these) for existence results and \cite{pos_ghosh,24,25,29,5} for the multiplicity results.

		Off-late, from a scientific point of view, fractional Sobolev spaces and related non-local problems have attracted the attention of many scholars because they occur naturally in many fields, such as electrorheological fluids and image processing (cf. Barrios et al. \cite{4} and the references therein). Readers who are interested to know the physical motivation behind the study of elliptic problems involving Kirchhoff operator can refer to Carrier \cite{carrier}.
		In fact, there are only a few papers on the $p(x)$-Laplace operator
involving singular nonlinearity and some of which can be found in the articles \cite{avci1,Sekhar2020} and the references therein.
		
At the end of the 20th century, the fractional variable derivatives proposed by two mathematicians, Ross and Samko \cite{SGS}, first appeared in nonlinear diffusion processes. At that time, scholars discovered that the temperature change of some objects with a reaction diffusion process can be better expressed by the variable derivative of a nonlocal integral-differential operator. Therefore, gradually, the problem of fractional variable-order   became favored by many authors.

Xiang et al. \cite{mingqi1} studied an elliptic problem driven by a fractional Laplace operator involving variable-order and proved the existence of two solutions using the Mountain pass theorem and Ekeland's variational principle. However, their key result  is that there exist infinitely many solutions to the limit problem when a parameter $\lambda$  goes to $\infty$. Xiang et al.~\cite{mingqi2} considered a variable-order fractional Kirchhoff-type problem which could be degenerate, they proved that there exists two nonnegative solutions using the Nehari manifold approach.				 Furthermore, they also  verified the existence of infinitely many solutions with the help of a symmetric critical point theorem.
		Wang and Zhang~\cite{wang_zhang_1} studied a variable-order fractional Kirchhoff-type problem involving variable exponents and also obtained the existence of infinitely many solutions. 
		
Further papers that can be consulted for the study of Kirchhoff problem are Fiscella \cite{fisc1}, Ghosh and Choudhuri~\cite{sekhar1}, Khiddi and Sbai \cite{khiddi1}. However, as far as we know, none of the cited works  addressed the problem of a variable-order Laplacian with a singularity of variable exponent. The novelty of our present work lies in the treatment of the problem with a variable critical nonlinearity and the estimate of the boundedness of infinitely many  solutions.
			
\section{Preliminaries}		 

  Let $X$ be a normed linear space
		$$X=\left\{u:\mathbb{R}^N\rightarrow\mathbb{R}~\text{measurable}:~u|_{\Omega}\in L^2(\Omega)~\text{and}~\left(\iint_{Q}\frac{|u(x)-u(y)|^2}{|x-y|^{N+2s(x,y)}}dxdy\right)^{\frac{1}{2}}<\infty\right\}$$
		 with the Gagliardo norm
		$$\|u\|_{X}=\|u\|_{2}+\left(\iint_{Q}\frac{|u(x)-u(y)|^2}{|x-y|^{N+2s(x,y)}}dxdy\right)^{\frac{1}{2}},$$
where
$$\Omega\subset\mathbb{R}^N, \quad  Q=\mathbb{R}^{2N}\setminus((\mathbb{R}^N\setminus\Omega)\times(\mathbb{R}^N\setminus\Omega))$$
and $\|u\|_2$ denotes the $L^2$-norm of $u$ over $\Omega$.
		
		We shall frequently use the following subspace $X_0$ of $X$
		$$X_0=\left\{u\in X:u=0~\text{in}~\mathbb{R}^N\setminus\Omega\right\}$$
		 with the norm
		$$\|u\|_{X_0}=\left(\iint_{Q}\frac{|u(x)-u(y)|^{2}}{|x-y|^{N+2s(x,y)}}dxdy\right)^{\frac{1}{2}}.$$		

		Our workspace 
		$(X_0,\|\cdot\|_{X_0})$ will be a Hilbert space with the norm $\|\cdot\|_{X_0}$ 
		induced by the inner product 
		$$
		\langle \psi,\varphi\rangle=\iint_{Q}\frac{(\psi(x)-\psi(y))(\varphi(x)-\varphi(y))}{|x-y|^{N+2s(x,y)}}dxdy,
		\quad
		\hbox{for all}
		\quad \psi, \varphi\in X_0.$$ 		
		
		Our workspace $X_{0}$ will also be a Banach space with reflexivity and separability, see~\cite[Lemma 2.3]{m4}.
		
		We conclude this section by defining the best Sobolev constant
		\begin{align}\label{sobolev_constant}
		S&=\underset{u\in X_0\setminus\{0\}}{\inf}\frac{\iint_{Q}\frac{|u(x)-u(y)|^{2}}{|x-y|^{N+2s(x,y)}}dxdy}{\|u\|_{\alpha(\cdot)}^2}.
		\end{align}
		\begin{remark}\label{space_equiv}
			
		\begin{enumerate}
				\item Note that
				$X,  X_0\subset W^{s(\cdot),2}(\Omega)$,   where $W^{s(\cdot),2}(\Omega)$ is the usual fractional variable-order
				Sobolev space  with the so-called Gagliardo norm
				 $$\|u\|_{W^{s(\cdot),2}(\Omega)}=\|u\|_{2}+\left(\iint_{\Omega\times\Omega}\frac{|u(x)-u(y)|^2}{|x-y|^{N+2s(x,y)}}dxdy\right)^{\frac{1}{2}}.$$
				
				\item Any uniform constant appearing in a well-known embedding will be represented by one of the symbols $C$, $C'$, $C''$, etc.
				with a prefix and/or suffix.
			\end{enumerate}
		\end{remark}		

	\begin{remark}\label{FA_Res}
		For all  fundamental results in functional analysis we direct the readers to
		Br\'{e}zis \cite{6}. For more details on fractional Sobolev spaces we refer to Di Nezza et al. \cite{11}.
		For further details on Lebesgue spaces with variable exponents we refer to R\u{a}dulescu and Repov\v{s} \cite{RD}.
	\end{remark}
	
	\section{Statement of the problem and the  main results}
	  We  investigate the existence of infinite solutions to the following variable-order problem with critical variable exponent growth
	\begin{equation}\label{problem}
	\left\{\begin{array}{cl}
	&\left(\displaystyle\iint_{\mathbb{R}^{2N}}\frac{|u(x)-u(y)|^2}{|x-y|^{N+2s(x,y)}}dxdy\right)^{\theta-1}(-\Delta)^{s(.)} u=\lambda\dfrac{u}{|u|^{\gamma(x)+1}}+|u|^{\alpha(x)-2}u~~~~\text{in}~\Omega,\\
	&			u=0~~~~~~~~~~~~~~~~~~~~~~~~~~~~~~~~~~~~~~~~~~~~~~~~~~~~~~~~~~~~~~~~~~~~~~~~~~~~~~~~~~~~~~~~\text{in}~\mathbb{R}^N\setminus\Omega,
	\end{array}\right.
	\end{equation}
	where $\Omega\subset\mathbb{R}^N$ with $ N\geq2$ is a smooth bounded domain, $\lambda>0$, $s(\cdot):\mathbb R^{2N}\to(0,1)$ is a continuous function such that 
	$$0<s^-=\underset{{(x,y)\in\overline{\Omega}\times\overline{\Omega}}}{\inf}\{s(x,y)\}\leq s(x,y)\leq s^+=\underset{{(x,y)\in\overline{\Omega}\times\overline{\Omega}}}{\sup}\{s(x,y)\}<1,$$
	 $\gamma(\cdot): \Omega\to(0,1)$ is a continuous function such that 
	 $$0<\gamma^-=\underset{{x\in\overline{\Omega}}}{\inf}\{\gamma(x)\}\leq \gamma(x)\leq \gamma^+=\underset{{x\in\overline{\Omega}}}{\sup}\{\gamma(x)\}<1,$$
	   $\alpha(\cdot): \Omega\to \mathbb{R}$ is a continuous function such that 
	   $$1-\gamma^-<1<2\theta<\alpha^-=\underset{x\in\overline{\Omega}}{\inf}\{\alpha(x)\}\leq \alpha(x)\leq \alpha^+=\underset{x\in\overline{\Omega}}{\sup}\{\alpha(x)\}\leq 2_{s}^*(x),$$ 
	   and $(-\Delta)^{s(\cdot)}$ is the fractional Laplace operator of variable-order $s(\cdot)$.	
	The fractional Laplace operator of a variable-order $s(\cdot)$ is defined by
	\begin{eqnarray}
	(-\Delta)^{s(\cdot)}u(x)&=&C_{N,s(\cdot)}\lim_{\epsilon\rightarrow 0}\int_{\mathbb{R}^N\setminus B_{\epsilon}(x)}\frac{(u(x)-u(y))}{|x-y|^{N+2s(x,y)}}dy, x\in\mathbb{R}^N,\nonumber
	\end{eqnarray}
	where $C_{N,s(\cdot )}>0$ is an explicit constant.
	
	Our main results are as follows.
	\begin{theorem}\label{embedding}
		Let $\Omega\subset\mathbb{R}^N$ be a Lipschitz bounded domain, $N>2s(\cdot)$ in $\bar{\Omega}\times\bar{\Omega}$,
		and
		 $\alpha(\cdot):\bar{\Omega}\to\mathbb R$ a \textit{continuous} function satisfying the following conditions:
		\\
		$(A_{1})$ $1<\alpha^{-}\leq \alpha^{+}$;
		\\
		$(A_{2})$  \textit{there exists $\varepsilon=\varepsilon(x)>0$ such that
			\begin{equation}\label{ps}
			 \underset{y\in\Omega_{x,\epsilon}}{\sup}\{\alpha(y)\}\leq\frac{2N}{N-2\underset{(y,z)\in\Omega_{x,\varepsilon}\times\Omega_{x,\epsilon}}{\inf} \{s(y,z)\}},
			\end{equation}
			where $\Omega_{z,\varepsilon}=B_{\varepsilon}(z)\bigcap\Omega,$ for  $z\in\Omega$.} \\
		Moreover, let $s(\cdot):\mathbb R^{2N}\to(0,1)$ be \textit{continuous} and 
		assume that
		\\
		$(H_{1})$ $0<s^{-}\leq s^{+}<1$;
		\\
		$(H_{2})$ \textit{$s(\cdot)$ is symmetric, that is, $s(x,y)=s(y,x),$ for any $(x,y)\in\mathbb R^{2N}$.}
		\\		Then there exists a constant $C=C(N,s,\alpha,\Omega)$ such that for every $u\in X,$ the following holds
		$$\|u\|_{\alpha(\cdot)}\leq C\|u\|_{W^{s(\cdot),2}(\Omega)},$$
		that is, the embedding $W^{s(\cdot),2}(\Omega)\hookrightarrow L^{\alpha(x)}(\Omega)$ is continuous.
	\end{theorem}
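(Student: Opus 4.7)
The plan is to reduce the variable-order, variable-exponent embedding to the classical constant-order fractional Sobolev embedding on small neighborhoods via a compactness argument. First I would fix $x\in\bar\Omega$ and pick $\varepsilon=\varepsilon(x)>0$ as in condition $(A_2)$; shrinking $\varepsilon$ if necessary, I may further require that $\varepsilon<1/4$ (so that $|y-z|<1$ for all $y,z\in\Omega_{x,\varepsilon}$) and that $\Omega_{x,\varepsilon}=B_{\varepsilon}(x)\cap\Omega$ remains a Lipschitz subdomain, which is possible since $\Omega$ is Lipschitz. Writing $s_x^-=\inf_{\Omega_{x,\varepsilon}\times\Omega_{x,\varepsilon}}s$ and $\alpha_x^+=\sup_{\Omega_{x,\varepsilon}}\alpha$, both attained by continuity, condition $(A_2)$ then reads $\alpha_x^+\leq 2N/(N-2s_x^-)$ with $0<s_x^-<1$ by $(H_1)$.

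The central comparison is to pass from the variable-order Gagliardo seminorm to a fixed-order one on each local patch. Since $s(y,z)\geq s_x^-$ and $|y-z|<1$ on $\Omega_{x,\varepsilon}\times\Omega_{x,\varepsilon}$, the elementary inequality $t^{a}\geq t^{b}$ for $0<t<1$ and $a\leq b$ yields $|y-z|^{N+2s_x^-}\geq|y-z|^{N+2s(y,z)}$, hence
\begin{equation*}
\iint_{\Omega_{x,\varepsilon}\times\Omega_{x,\varepsilon}}\frac{|u(y)-u(z)|^2}{|y-z|^{N+2s_x^-}}\,dy\,dz \leq \iint_{\Omega\times\Omega}\frac{|u(y)-u(z)|^2}{|y-z|^{N+2s(y,z)}}\,dy\,dz \leq \|u\|_{W^{s(\cdot),2}(\Omega)}^2.
\end{equation*}
Combined with $\|u\|_{L^2(\Omega_{x,\varepsilon})}\leq\|u\|_{L^2(\Omega)}\leq\|u\|_{W^{s(\cdot),2}(\Omega)}$, this shows $u|_{\Omega_{x,\varepsilon}}\in W^{s_x^-,2}(\Omega_{x,\varepsilon})$ with norm controlled by $\|u\|_{W^{s(\cdot),2}(\Omega)}$. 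The classical fractional Sobolev embedding on the Lipschitz piece gives $W^{s_x^-,2}(\Omega_{x,\varepsilon})\hookrightarrow L^{2N/(N-2s_x^-)}(\Omega_{x,\varepsilon})$, and since $\alpha(y)\leq\alpha_x^+\leq 2N/(N-2s_x^-)$ on the bounded set $\Omega_{x,\varepsilon}$, H\"older's inequality for variable exponents produces a local constant $C_x$ with $\|u\|_{L^{\alpha(\cdot)}(\Omega_{x,\varepsilon})}\leq C_x\|u\|_{W^{s(\cdot),2}(\Omega)}$.

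The globalization is then routine: extract a finite subcover $\{\Omega_{x_i,\varepsilon(x_i)}\}_{i=1}^k$ of the compact set $\bar\Omega$, and use the subadditivity bound $\|u\|_{L^{\alpha(\cdot)}(\Omega)}\leq \sum_{i=1}^k\|u\|_{L^{\alpha(\cdot)}(\Omega_{x_i,\varepsilon(x_i)})}$, which follows from $\|u\mathbf 1_A\|_{L^{\alpha(\cdot)}(\Omega)}=\|u\|_{L^{\alpha(\cdot)}(A)}$ together with the triangle inequality for the Luxemburg norm, to sum the local estimates and obtain the required inequality with $C=\sum_i C_{x_i}$. I expect the main obstacle to be the seminorm comparison above: the patches must be small enough to force $|y-z|<1$, since for $|y-z|\geq 1$ the inequality $s(y,z)\geq s_x^-$ reverses direction and the estimate fails. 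A secondary technicality is ensuring each $\Omega_{x,\varepsilon}$ is Lipschitz at boundary points of $\Omega$, which is standard for Lipschitz $\Omega$ and small $\varepsilon$ but worth recording.
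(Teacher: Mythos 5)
Your proof is correct and follows essentially the same localization-and-cover strategy as the paper: cover $\bar\Omega$ by small balls on which $(A_2)$ gives a pointwise comparison with the local critical exponent, compare the variable-order Gagliardo seminorm to a constant-order one via $|y-z|^{N+2s_x^-}\geq|y-z|^{N+2s(y,z)}$ on small patches, apply the classical constant-order embedding locally, and sum. Two points where you diverge are worth noting. First, the paper proves only the set inclusion $W^{s(\cdot),2}(\Omega)\subset L^{\alpha(\cdot)}(\Omega)$ by bounding the modular $\int_\Omega|u|^{\alpha(x)}dx$, and then invokes the Closed Graph Theorem to upgrade the inclusion to a bounded embedding; you instead prove the Luxemburg-norm inequality directly on each patch and globalize via the triangle inequality for the variable-exponent norm, which is more constructive and avoids an appeal to functional-analytic machinery. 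Second, you explicitly shrink $\varepsilon<1/4$ so that $|y-z|<1$ on $\Omega_{x,\varepsilon}\times\Omega_{x,\varepsilon}$, which is exactly the condition needed for the seminorm comparison to go in the right direction; the paper only requires $\varepsilon_j\in(0,1)$, which leaves the diameter possibly up to $2$ and so does not literally justify its inequality (its Eq.~(4.2)) without this further restriction. Your version is thus slightly tighter on this technicality, though the paper's argument is easily repaired by the same shrinking.
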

	
	\begin{theorem}\label{mainthm1}
		There exists $\lambda_0>0$ such that for any $\lambda\in(0,\lambda_0),$  problem  \eqref{problem} has infinitely many solutions in $X_0$.	
	\end{theorem}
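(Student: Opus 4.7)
The plan is to find solutions of \eqref{problem} as critical points of the even energy functional $J_\lambda:X_0\to\mathbb R$ defined by
$$J_\lambda(u)=\frac{1}{2\theta}\|u\|_{X_0}^{2\theta}-\lambda\int_\Omega\frac{|u|^{1-\gamma(x)}}{1-\gamma(x)}\,dx-\int_\Omega\frac{|u|^{\alpha(x)}}{\alpha(x)}\,dx,$$
and then to extract infinitely many of them by combining Krasnoselskii's genus with a symmetric mountain pass argument of Clark--Kajikiya type. Under the standing assumptions $1-\gamma^{+}<1<2\theta<\alpha^{-}\leq\alpha^{+}\leq 2_{s}^*(x)$, the continuous embedding supplied by Theorem \ref{embedding} ensures that every term of $J_\lambda$ is well defined on $X_0$; evenness is immediate since each term depends only on $|u|$.

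First I would verify the local geometry. Because $2\theta>1>1-\gamma^{-}$, the Kirchhoff term $\|u\|_{X_0}^{2\theta}$ dominates both the singular and the critical contribution on a sufficiently small ball $B_R\subset X_0$, so $J_\lambda$ is bounded from below on $B_R$ and, thanks to the sublinear exponent $1-\gamma(x)$, attains strictly negative values on every sufficiently small sphere. For each $k$-dimensional subspace $E_k\subset X_0$, a scaling argument exploiting that $\inf_{v\in E_k,\,\|v\|_{X_0}=1}\int_\Omega|v|^{1-\gamma(x)}\,dx>0$ on the unit sphere of the finite-dimensional space $E_k$ produces a compact symmetric set $A_k\subset E_k\cap B_R$ of Krasnoselskii genus equal to $k$ with $\sup_{A_k}J_\lambda<-\delta_k<0$. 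Defining the minimax values $c_k=\inf_{A\in\Sigma_k}\sup_{u\in A}J_\lambda(u)$ over the family $\Sigma_k$ of closed symmetric subsets of genus at least $k$, one obtains a nondecreasing sequence with $c_k\to 0^{-}$ as $k\to\infty$.

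The principal obstacle is compactness in the critical regime $\alpha(x)\leq 2_{s}^*(x)$. Via a concentration-compactness argument adapted to variable exponents, together with the Brezis--Lieb lemma, I would prove that $J_\lambda$ satisfies the Palais--Smale (or Cerami) condition at every level $c<c^{*}$, where the threshold $c^{*}=c^{*}(S,\theta,N,\alpha,\lambda)$ involves the Sobolev constant \eqref{sobolev_constant} and stays bounded away from $0$ as $\lambda\to 0^{+}$. Since $c_k\to 0^{-}$, there exists $\lambda_0>0$ such that $c_k<c^{*}$ holds simultaneously for every $k$ whenever $\lambda\in(0,\lambda_0)$, so every $c_k$ is a critical value of $J_\lambda$.

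A final technical subtlety is that $J_\lambda$ fails to be of class $C^1$ because of the singular integrand; I would handle this either by nonsmooth critical-point theory, or by the standard truncation--approximation scheme in which $|u|^{-\gamma(x)-1}u$ is replaced by a bounded Carath\'eodory perturbation for small $|u|$, the genus machinery is then applied to the smooth truncated functional, and finally the uniform $L^\infty$ bootstrap promised among the main results, combined with a comparison-principle argument, is used to recover weak solutions of the original problem. By the standard multiplicity theorem for even functionals, the distinct critical values $c_k$ (or the critical sets of genus at least two associated with those that coincide) yield infinitely many nontrivial weak solutions in $X_0$, completing the proof.
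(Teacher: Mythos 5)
Your overall blueprint is the same as the paper's: genus, a Clark--Kajikiya symmetric minimax over small spheres of finite-dimensional subspaces, a $(PS)_c$ compactness threshold obtained from Br\'ezis--Lieb and the Sobolev constant in \eqref{sobolev_constant}, and the observation that the sublinear singular term forces negative energy on small spheres while the sequence of critical levels tends to $0^-$. But there is a genuine gap in how you propose to deal with the lack of smoothness.

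You say you would replace the singular nonlinearity $|u|^{-\gamma(x)-1}u$ by a bounded Carath\'eodory perturbation near $u=0$ to obtain a $C^1$ functional, and then run the genus machinery. That truncation addresses regularity at $u=0$, but it does \emph{not} restore the hypothesis of the Kajikiya theorem (stated in the paper as Theorem \ref{sym mountain}) that the even functional be \emph{bounded below on all of $X_0$}. With the critical term $\int_{\Omega}\alpha(x)^{-1}|u|^{\alpha(x)}dx$ untouched and $\alpha^{-}>2\theta$, the functional $J_\lambda$ (truncated or not near $0$) is unbounded below; consequently, already $c_1=\inf_{A\in\Sigma_1}\sup_{A}J_\lambda=-\infty$, because any two antipodal points $\{u,-u\}$ constitute a set of genus $1$ and $J_\lambda(u)\to-\infty$ along scalings. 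Your claim that ``$c_k\to0^-$'' therefore cannot be established for the functional you work with, and the Clark--Kajikiya machinery does not apply. The paper resolves this precisely by the other truncation: it cuts off the \emph{critical} term via $\eta(\|u\|_{X_0}^2/2)$ in the cutoff problem \eqref{main3}, which makes $\bar I_\lambda$ coercive (see the computation \eqref{coercive1}), hence bounded below, and yet leaves the functional unchanged on the small ball $\|u\|_{X_0}^2\le 2l$, so that the critical points produced by Theorem \ref{sym mountain} (which converge to $0$) are genuine solutions of \eqref{problem} by Remark \ref{key_obs}. The residual non-differentiability at $u=0$ is handled in the paper not by truncating the singular term but by working on $X_0\setminus\{0\}$ and choosing $(PS)$ sequences away from $0$ (beginning of the proof of Lemma \ref{ps limit}, plus Remark \ref{scheme of seq} and Remark \ref{derivative_explanation}). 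You should either adopt the norm-cutoff of the critical term, or explicitly invoke a version of the symmetric minimax theorem for functionals that are bounded below only on a ball and justify that the resulting critical points stay in the ball's interior; as written, the boundedness-below hypothesis is not met and the minimax levels degenerate.
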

	\begin{theorem}\label{bdd_thm}
		Let $0\leq u\in X_0$ be a weak solution to problem  \eqref{problem}. Then $u\in L^{\infty}(\Omega)$.
	\end{theorem}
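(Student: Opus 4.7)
The plan is to execute a Moser-type bootstrap on the pointwise equation, using the continuous embedding $X_{0}\hookrightarrow L^{\alpha(\cdot)}(\Omega)$ from Theorem~\ref{embedding} to gain integrability at each step, together with a Brezis--Kato absorption to tame the critical term. Since $u\in X_{0}$ is fixed, the Kirchhoff factor $M:=\|u\|_{X_{0}}^{2(\theta-1)}$ is a nonnegative constant. If $u\equiv 0$ the claim is trivial, so I assume $M>0$, in which case the equation reads
\begin{equation*}
(-\Delta)^{s(\cdot)} u \;=\; \frac{1}{M}\bigl(\lambda\, u^{-\gamma(x)} + u^{\alpha(x)-1}\bigr) \;=:\; f(x,u)\quad\text{a.e.\ in }\Omega.
\end{equation*}
The singular term $\lambda u^{-\gamma(x)}$ is bounded by $\lambda/M$ on $\{u\geq 1\}$ and thus harmless for an $L^{\infty}$ estimate; the essential difficulty lies in the critical term $u^{\alpha(x)-1}$.

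First I would set up the iteration via a truncated test function. For $k>0$ and $\beta\geq 0$, let $u_{k}:=\min\{u,k\}$ and test the weak formulation against $\varphi:=u\,u_{k}^{2\beta}\in X_{0}$ (admissible because $u_{k}$ is bounded). A fractional algebraic inequality of the form $(a-b)(a a_{k}^{2\beta}-b b_{k}^{2\beta})\geq c_{\beta}|a a_{k}^{\beta}-b b_{k}^{\beta}|^{2}$, combined with Theorem~\ref{embedding} applied to $u u_{k}^{\beta}\in X_{0}$, yields
\begin{equation*}
\|u u_{k}^{\beta}\|_{\alpha(\cdot)}^{2} \;\leq\; \frac{C}{c_{\beta}M}\int_{\Omega}\bigl(\lambda\, u^{1-\gamma(x)}+u^{\alpha(x)}\bigr)u_{k}^{2\beta}\,dx.
\end{equation*}
To control the critical contribution, I would split $\Omega=\{u\leq R\}\cup\{u>R\}$: on $\{u\leq R\}$ the integrand is bounded by $R^{\alpha^{+}}u_{k}^{2\beta}$, a subcritical term, while on $\{u>R\}$ I would factor $u^{\alpha(x)}u_{k}^{2\beta}=u^{\alpha(x)-2}(u u_{k}^{\beta})^{2}$ and apply a variable-exponent H\"older inequality with a conjugate pair keyed to $N/(2\bar{s}(x))$. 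Since $u\in L^{\alpha(\cdot)}(\Omega)$ by assumption, absolute continuity of the integral makes $\|u^{\alpha(\cdot)-2}\|_{L^{N/(2\bar{s}(\cdot))}(\{u>R\})}$ arbitrarily small for $R$ large, and choosing $R$ so that this small factor times the embedding constant stays below $1/2$ absorbs the critical piece on the left, leaving
\begin{equation*}
\|u u_{k}^{\beta}\|_{\alpha(\cdot)}^{2} \;\leq\; \widetilde{C}_{\beta}\bigl(1+R^{\alpha^{+}}\bigr)\int_{\Omega}\bigl(1+u^{2\beta}\bigr)\,dx.
\end{equation*}
Passing $k\to\infty$ by monotone convergence then gives $u\in L^{\alpha^{-}(\beta+1)}(\Omega)$ whenever $u\in L^{\max\{2\beta,\alpha^{-}\}}(\Omega)$.

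Finally I would iterate. Starting from $u\in L^{\alpha(\cdot)}(\Omega)$ and applying Theorem~\ref{embedding} with a comparison exponent arbitrarily close to $2_{s}^{*}(\cdot)$, the recursion has effective gain ratio strictly greater than one (since $2_{s}^{*}(x)>2$ for every $x$); standard Moser bookkeeping, verifying that $\prod_{n}\widetilde{C}_{\beta_{n}}^{1/(\beta_{n}+1)}$ converges, then delivers $\|u\|_{\infty}\leq C$ with $C$ depending only on $\lambda$, $M$, $\|u\|_{X_{0}}$, and the structural data. I expect the main obstacle to be the variable-exponent bookkeeping during the Brezis--Kato absorption: the H\"older conjugate exponents vary pointwise with $\bar{s}(x)$ and $\alpha(x)$, so one must either pass to a finite open cover of $\overline{\Omega}$ on which these functions are pinned between controlled constants (using the local structure in hypothesis~\eqref{ps}) and reassemble the pieces, or rely on variable-exponent H\"older inequalities directly from R\u{a}dulescu--Repov\v{s}. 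Modulo this technical care, the scheme is the standard Moser--Brezis--Kato iteration adapted to the variable-order fractional setting, and it yields the claimed $L^{\infty}$ estimate.
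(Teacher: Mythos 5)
Your proposal and the paper both run a Moser bootstrap: test with a power-type test function, use a fractional algebraic inequality (the paper cites Lemma~C.2 of Brasco--Lindgren--Parini) to pass to a Gagliardo seminorm of a power of $u$, apply the embedding of Theorem~\ref{embedding} to gain integrability, and iterate. Where you diverge is in two points, and both make your version more robust than the paper's admittedly sketchy argument. First, you test against the truncation $u\,u_k^{2\beta}$ rather than against $u^p$; this guarantees that the test function lies in $X_0$, whereas testing with $u^p$ requires an a priori integrability that is not justified. Second, and more significantly, you handle the critical term via a Brezis--Kato splitting $\Omega=\{u\leq R\}\cup\{u>R\}$, absorbing the tail on the left because $\|u^{\alpha(\cdot)-2}\|_{L^{N/(2\bar{s}(\cdot))}(\{u>R\})}\to 0$ as $R\to\infty$ by absolute continuity of the integral. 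The paper instead bounds the critical integral by H\"{o}lder against $\|u\|_{\beta^*}^{\alpha^+}$ for an auxiliary exponent $\beta^*>\alpha^+$; for its iteration ratio $\Theta=\beta^*/(2t)$ to exceed $1$ one would need $\beta^*>\alpha^++2$, and the finiteness of $\|u\|_{\beta^*}$ at this level is never established (it can lie above the embedding exponent $2^*_{s^-}$ when $\alpha^+$ is near-critical). Your Brezis--Kato absorption is precisely the mechanism that lets the iteration start from the natural $u\in L^{\alpha(\cdot)}(\Omega)$, so your proposal is best read as a completion of the paper's sketch rather than an alternative to it. The variable-exponent bookkeeping you flag is real, but localizing to a finite cover where $\bar{s}$ and $\alpha$ are pinned between controlled constants, exactly as in the proof of Theorem~\ref{embedding}, is a sound way to discharge it.
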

	
\subsection{Weak solutions}\label{3.1}
	
  Associated to  problem \eqref{problem}, the energy functional $I_\lambda\colon X_0\rightarrow\mathbb{R}$   is defined by
\begin{align}\label{energy p}
I_{\lambda}(u) =
\frac{1}{2\theta}\|u\|_{X_0}^{2\theta}-\int_{\Omega}\frac{\lambda}{1-\gamma(x)}|u|^{1-\gamma(x)}dx-\int_{\Omega}\frac{1}{\alpha(x)}|u|^{\alpha(x)}dx.
\end{align}

Having defined  $I_{\lambda}$, we now define the sense in which the solution to problem \eqref{problem} will be considered.
\begin{definition}\label{defn_weak_soln_main_prob}$u\in X_0$ is said to be a {\it weak solution} to  problem \eqref{problem} if $|u|^{-\gamma(x)}\varphi \in L^1(\Omega)$ and
	\begin{align}\label{weak_soln}
	\|u\|_{X_0}^{2(\theta-1)}\langle u,\varphi\rangle&=\int_{\Omega}\lambda|u|^{-\gamma(x)-1}u\varphi dx+\int_{\Omega}|u|^{\alpha(x)-2}u\varphi dx
	\end{align}
	for every $\varphi\in X_0$. Here,
	$$\langle u,\varphi\rangle=\iint_{Q}\frac{(u(x)-u(y))(\varphi(x)-\varphi(y))}{|x-y|^{N+2s(x,y)}}dxdy.$$
\end{definition}

Note that the functional $I_{\lambda}$ is continuous in $X_0$, but it fails to be $C^1$ in $X_0$. This is  bad news as far as the application of theorems in the variational analysis is concerned because, most of them demand the energy functional to be $C^1$ over its domain of definition.
Thus, inspired by the {\it cutoff} technique introduced by Clark and Gilbarg \cite{clark1972variant} and later used by Gu et al.\cite{Gu2018aml}, we shall apply it to  problem \eqref{problem}.

Choose $l$ to be sufficiently small and  define  an even  \textit{cutoff} 
$C^{1}$-function $\eta:\mathbb{R}\rightarrow\mathbb{R}^+$ such that $0\leq\eta(t)\leq1$ and
$$\eta(t)=\begin{cases}
1, ~\text{if}~ |t|\leq l\\
\eta ~\text{is decreasing, if}~ l\leq t\leq 2l\\
0,~\text{if}~ |t|\geq 2l.
\end{cases}$$

Set $$f(u(x)):=|u(x)|^{\alpha(x)-2}u(x) \quad \hbox{for all} \quad u\in X_0.$$ 

Clearly, we have $f(-u(x))=-f(u(x))$. Now consider the following {\it cutoff} problem:
\begin{equation}\label{main3}
\left\{\begin{array}{cl}
&\left(\displaystyle\iint_{\mathbb{R}^{2N}}\frac{|u(x)-u(y)|^2}{|x-y|^{N+2s(x,y)}}dxdy\right)^{\theta-1}(-\Delta)^{s(.)} u=\lambda\dfrac{u}{|u|^{\gamma(x)+1}}+\tilde{f}(u)~~~~\text{in}~\Omega,\\
&			u=0~~~~~~~~~~~~~~~~~~~~~~~~~~~~~~~~~~~~~~~~~~~~~~~~~~~~~~~~~~~~~~~~~~~~~~~~~~~~~~~~~\text{in}~\mathbb{R}^N\setminus\Omega,
\end{array}\right.
\end{equation}
where
$$\tilde{f}(u(x))=f(u(x))\eta\left(\frac{\|u\|_{X_0}^2}{2}\right).$$

We further define $$F(u(x))=\frac{|u(x)|^{\alpha(x)}}{\alpha(x)},
\quad
\tilde{F}(u(x))=F(u(x))\eta\left(\frac{\|u\|_{X_0}^2}{2}\right).$$
\begin{remark}\label{key_obs}It is easy to see that if $u$ is a weak solution to problem \eqref{main3} with $\frac{\|u\|_{X_0}^2}{2}\leq l$, then $u$ is also a weak solution to problem \eqref{problem}. 
\end{remark}

\begin{remark}\label{delta_0}
	We shall  use
	\begin{equation}\label{delta}\delta=\begin{cases}
	\gamma^{+}, & \|u^{1-\gamma(x)}\|_{\frac{1}{1-\gamma(x)}} < 1\\
	\gamma^{-}, & \|u^{1-\gamma(x)}\|_{\frac{1}{1-\gamma(x)}}  > 1
	\end{cases}\end{equation}
	whenever we shall  estimate the singular term.
	Throughout this article, wherever necessary, any subsequence of a given sequence will be represented by the symbol of the sequence itself.
\end{remark}
\begin{remark}\label{Not1}
	For better understanding of some of the notations, we define 
	$$\|u\|_{X_0}=\|U\|_{L^2(k(\cdot)dxdy)}
	\quad
	\hbox{ for all}
	\quad u\in X_0,$$ where $$
	U(x,y)=u(x)-u(y),
	\quad
	k(\cdot)dxdy=k(x,y)dxdy=\frac{1}{|x-y|^{N+2s(x,y)}}dxdy$$
	 and
	\begin{align}\label{notation1}\|U\|_{L^2(k(\cdot)dxdy)}=\int_{\Omega\times\Omega}|U(x,y)|^2k(x,y)dxdy.\end{align}
\end{remark}
The energy functional $\bar{I}_{\lambda}:X_0\rightarrow\mathbb{R}$, associated to  problem \eqref{main3} is defined as follows:
$$\bar{I}_{\lambda}(u)=\frac{1}{2\theta}\|u\|_{X_0}^{2\theta}-\int_{\Omega}\frac{\lambda}{1-\gamma(x)}|u|^{1-\gamma(x)}dx-\int_{\Omega}\tilde{F}(u)dx.$$

Furthermore, the Fr\'{e}chet derivative of $\int_{\Omega}\tilde{F}(u)dx$ is
\begin{align}\label{frech_der}
\eta\left(\frac{\|u\|_{X_0}^2}{2}\right)\int_{\Omega}\tilde{f}(u)\varphi dx+\eta'\left(\frac{\|u\|_{X_0}^2}{2}\right)\langle u,\varphi \rangle\int_{\Omega}\tilde{F}(u)dx~\text{for every}~\varphi\in X_0.
\end{align}

However, by  Remark \ref{key_obs}, since we are interested in those solutions which obey $\frac{\|u\|_{X_0}^2}{2}\leq l$,  the Fr\'{e}chet derivative of $\int_{\Omega}\tilde{F}(u)dx$ can be considered to be of the form
\begin{align}\label{frech_der_1}
\int_{\Omega}\tilde{f}(u)\varphi dx~\text{for every}~\varphi\in X_0.
\end{align}

 Note that $\bar{I}_{\lambda}$ is an even functional over $X_0$.
Finally, we give the definition of weak solutions to problem \eqref{main3}.
\begin{definition}\label{cutoff_weak_soln}
	$u\in X_0$ is said to be a weak solution to problem \eqref{main3} if $|u|^{-\gamma(x)}\varphi \in L^1(\Omega)$ and
	\begin{align}\label{weak_soln_mod_prob}
	\|u\|_{X_0}^{2(\theta-1)}\langle u,\varphi\rangle&=\lambda\int_{\Omega} {|u|}^{-\gamma(x)-1}u\varphi dx+\int_{\Omega}\tilde{f}(u)\varphi dx
	\end{align}
	for every $\varphi\in X_0$.
\end{definition}	
	
\subsection{Existence of nontrivial solutions}

  The proof of Theorem \ref{mainthm1} will require several lemmas which we  now state and prove in this subsection. In order to establish the existence of infinitely many solutions to
  problem  \eqref{main3}, one needs a prior knowledge of whether at least one solution to problem \eqref{main3} exists or not.  Lemmas \ref{thm1} and \ref{thm2} will establish the existence of a nontrivial solution for problem \eqref{main3} in $X_0$ at which $\bar{I}_{\lambda}$ attains this infimum. This infimum incidentally has been found to be negative, in a small neighbourhood of zero.

 We at first notice that since $\bar{I}_{\lambda}\in C^0(X_0,\mathbb{R})$ and hence the following minimization problem can be considered
\begin{align}\label{min_prob}m_{\lambda}&=\underset{u\in \overline{B_r}}{\min}\bar{I}_{\lambda}(u)\end{align} for some $r>0$.
\begin{lemma}\label{thm1}
	There exist $r\in (0,1)$, $\lambda_r>0$ and $\beta_r>0$ such that $\bar{I}_{\lambda}(u)>\beta_r$ for all $\lambda\in (0,\lambda_r)$ and $u\in X_0$ with $\|u\|_{X_0}=r$. Let $$m_{\lambda}=\underset{u\in \overline{B_r}}{\inf}\bar{I}_{\lambda}(u).$$
	Then $m_{\lambda}<0$ for all $\lambda\in (0,\lambda_r]$.
\end{lemma}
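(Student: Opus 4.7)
The plan is to split the argument to match the two claims. For the first part, I will estimate $\bar I_\lambda(u)$ from below on the sphere $\|u\|_{X_0}=r$ by using Theorem \ref{embedding} and standard modular--norm inequalities in variable exponent Lebesgue spaces to control the singular and critical terms by suitable powers of $r$, and then exploit $1-\gamma^+<2\theta<\alpha^-$ to make the Kirchhoff term dominate once $r$ and $\lambda$ are small enough. For the second part, I will test $\bar I_\lambda$ along a fiber $t\phi$ with $\phi\in C_c^\infty(\Omega)\setminus\{0\}$; the singular integral then scales like $t^{1-\gamma^-}$, which dominates the Kirchhoff term $t^{2\theta}$ and the critical term $t^{\alpha^+}$ as $t\to 0^+$, producing a point of $\overline{B_r}$ at which $\bar I_\lambda$ is strictly negative.

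For the first part, I pick $r\in(0,1)$ small enough that (i) $r^2/2\leq l$, so the cutoff $\eta$ is identically $1$ on $\overline{B_r}$ and $\bar I_\lambda$ coincides with $I_\lambda$ there; and (ii) $C_{\mathrm{emb}}\, r\leq 1$, where $C_{\mathrm{emb}}$ is the constant obtained by combining Theorem \ref{embedding} with the continuous inclusion $X_0\hookrightarrow W^{s(\cdot),2}(\Omega)$. For $\|u\|_{X_0}=r$ this gives $\|u\|_{\alpha(\cdot)}\leq 1$, so the modular--norm inequality in $L^{\alpha(\cdot)}(\Omega)$ yields $\int_\Omega|u|^{\alpha(x)}\,dx\leq \|u\|_{\alpha(\cdot)}^{\alpha^-}\leq C_{\mathrm{emb}}^{\alpha^-}\,r^{\alpha^-}$. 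For the singular integral, H\"older's inequality in variable exponents with conjugate pair $p(x)=\alpha(x)/(1-\gamma(x))$ and $p'(x)$ (admissible because $\alpha^->1>1-\gamma^+$, so $p^->1$), together with the norm--power identity and Remark \ref{delta_0} (in the present regime $\|u\|_{\alpha(\cdot)}<1$ forces $\delta=\gamma^+$), yields $\int_\Omega|u|^{1-\gamma(x)}\,dx\leq C_{\mathrm{sing}}\,r^{1-\gamma^+}$. Consequently,
\[
\bar I_\lambda(u)\;\geq\;\frac{r^{2\theta}}{2\theta}\;-\;\lambda\,C_1\,r^{1-\gamma^+}\;-\;C_2\,r^{\alpha^-}.
\]
Shrinking $r$ further, the critical term is absorbed into half of the leading term (using $\alpha^->2\theta$); then, since $1-\gamma^+<2\theta$, I can choose $\lambda_r>0$ so that the singular contribution is absorbed into the remaining quarter for every $\lambda\in(0,\lambda_r)$, leaving $\bar I_\lambda(u)\geq r^{2\theta}/(8\theta)=:\beta_r>0$.

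For the second part, fix a nontrivial nonnegative $\phi\in C_c^\infty(\Omega)\subset X_0$ and consider $t\phi$ for $t\in(0,1)$ small enough that $t\phi\in\overline{B_r}$ and the cutoff is inactive. The identity
\[
\bar I_\lambda(t\phi)=\frac{t^{2\theta}}{2\theta}\,\|\phi\|_{X_0}^{2\theta}-\lambda\!\int_\Omega\!\frac{t^{1-\gamma(x)}\phi^{1-\gamma(x)}}{1-\gamma(x)}\,dx-\!\int_\Omega\!\frac{t^{\alpha(x)}\phi^{\alpha(x)}}{\alpha(x)}\,dx,
\]
combined with the pointwise bounds $t^{1-\gamma(x)}\geq t^{1-\gamma^-}$ and $t^{\alpha(x)}\geq t^{\alpha^+}$ (both valid for $t\in(0,1)$), yields $\bar I_\lambda(t\phi)\leq c_1 t^{2\theta}-\lambda c_2 t^{1-\gamma^-}-c_3 t^{\alpha^+}$ with constants $c_i>0$ depending only on $\phi$. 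Since $1-\gamma^-<\min\{2\theta,\alpha^+\}$, the singular term dominates as $t\to 0^+$, hence $\bar I_\lambda(t\phi)<0$ for $t$ sufficiently small, and therefore $m_\lambda\leq \bar I_\lambda(t\phi)<0$ for every $\lambda\in(0,\lambda_r]$ (indeed for every $\lambda>0$).

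The main technical difficulty I expect is the precise variable-exponent H\"older/norm--power bookkeeping for the singular integral: one must verify that the correct exponent in the upper bound is $1-\gamma^+$ rather than $1-\gamma^-$ (exactly the regime forced by $\|u\|_{\alpha(\cdot)}<1$ via Remark \ref{delta_0}), because only the strict inequality $1-\gamma^+<2\theta$ allows the singular term to be absorbed into the Kirchhoff term after taking $\lambda$ small. Once this accounting is in place, the remaining steps reduce to elementary comparison of powers of $r$ and $t$.
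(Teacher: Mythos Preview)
Your proof is correct and follows essentially the same approach as the paper: bound the singular and critical integrals by powers of $\|u\|_{X_0}$ via H\"older and Theorem~\ref{embedding}, use $2\theta<\alpha^-$ to fix a small $r$, then shrink $\lambda$, and for $m_\lambda<0$ test along a ray $t\phi$ exploiting $1-\gamma^-<2\theta$. Your version is more explicit than the paper's about the variable-exponent modular--norm bookkeeping (in particular the exponent $1-\gamma^+$ in the small-norm regime and the role of the cutoff), but there is no substantive methodological difference.
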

\begin{proof}
	Let $\lambda>0$. Clearly, since $\bar{I}_{\lambda}$ is continuous and is defined at every point in $X_0$, an infimum of $\bar{I}_{\lambda}$ exists in any closed and bounded neighbourhood of $0\in X_0$. On using the H\"{o}lder inequality and the continuous embedding of $X_0$ in $L^1(\Omega)$, we have for any $u\in X_0,$
	\begin{eqnarray}\label{eqn0}
	\int_{\Omega}|u|^{1-\gamma(x)}dx&\leq &C\|u\|_{X_0}^{1-\delta},
	\end{eqnarray}
	where $C>0$.  It follows from Theorem \ref{embedding} that
	 $$\frac{1}{2\theta}\|u\|_{X_0}^{2\theta}-\int_{\Omega}\frac{1}{\alpha(x)}\eta\left(\frac{\|u\|_{X_0}^2}{2}\right)|u|^{\alpha(x)}dx\geq\frac{1}{2\theta}\|u\|_{X_0}^{2\theta}-\frac{C'}{\alpha^-}\|u\|_{X_0}^{\alpha^-}>0$$ for every $u\in \partial B_r,$ provided $0<r<1$ is small enough.
	 
	  Fix such an $r>0$,  taking $\lambda>0$ small enough, other negative terms in $\bar{I}_{\lambda}$ may be made arbitrarily small. Therefore we
	can
	 find $r$ and $\lambda$ such that
	$\underset{u\in\partial B_r}\min\{\bar{I}_{\lambda}(u)\}>0.$
	Thus there exists  $\beta_r>0$ such that $\bar{I}_{\lambda}(u)>\beta_r$ for $u\in X_0$ with
	 $\|u\|_{X_0}=r$.
	Moreover, since $\bar{I}_{\lambda}(tu)<0$ for $t$ small enough, we have
	$m_{\lambda}<0.$
	Therefore, say, $m_{\lambda}=\underset{u\in \bar{B}_r}\inf\{\bar{I}_{\lambda}(u)\}<0$.
\end{proof}

	  We now prove that for a finite range of $\lambda$, a solution to  problem \eqref{main3} exists.
	\begin{lemma}\label{thm2}
		Let $\lambda_r$ be given as in  Lemma~\ref{thm1}. Then for any $\lambda\in(0,\lambda_r)$, problem  \eqref{main3} has a solution $u_0\neq 0\in X_0$ with $\bar{I}_{\lambda}(u_0)=m_{\lambda}<0$.
	\end{lemma}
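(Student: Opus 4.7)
The plan is to realize $u_{0}$ as an interior minimizer of $\bar{I}_{\lambda}$ on the closed ball $\overline{B_{r}}$ from Lemma \ref{thm1}, and then to verify that this $u_{0}$ is a weak solution of \eqref{main3} in the sense of Definition \ref{cutoff_weak_soln}. Take a minimizing sequence $\{u_{n}\}\subset\overline{B_{r}}$ with $\bar{I}_{\lambda}(u_{n})\to m_{\lambda}<0$. Boundedness in the reflexive Hilbert space $X_{0}$ gives, along a subsequence, $u_{n}\rightharpoonup u_{0}$ weakly in $X_{0}$, $u_{n}\to u_{0}$ pointwise a.e., and strongly in $L^{p}(\Omega)$ for every $p<\min_{\bar{\Omega}}2_{s}^{*}(x)$. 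The singular integral $\int_{\Omega}\lambda|u_{n}|^{1-\gamma(x)}/(1-\gamma(x))\,dx$ then passes to the limit by dominated convergence together with the bound \eqref{eqn0}.

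For the critical term I choose $r\leq\sqrt{2l}$ so that $\eta(\|u\|_{X_{0}}^{2}/2)\equiv 1$ on $\overline{B_{r}}$ and $\tilde{F}\equiv F$ there. Setting $v_{n}:=u_{n}-u_{0}$, the Brezis--Lieb lemma in its variable-exponent form yields
\[
\int_{\Omega}\frac{|u_{n}|^{\alpha(x)}}{\alpha(x)}dx=\int_{\Omega}\frac{|u_{0}|^{\alpha(x)}}{\alpha(x)}dx+\int_{\Omega}\frac{|v_{n}|^{\alpha(x)}}{\alpha(x)}dx+o(1).
\]
Combining this with the parallelogram identity $\|u_{n}\|_{X_{0}}^{2}=\|u_{0}\|_{X_{0}}^{2}+\|v_{n}\|_{X_{0}}^{2}+o(1)$, the elementary inequality $(a+b)^{\theta}\geq a^{\theta}+b^{\theta}$ for $a,b\geq 0$ and $\theta\geq 1$, and Theorem \ref{embedding} in the form $\int_{\Omega}|v_{n}|^{\alpha(x)}dx\leq C\|v_{n}\|_{X_{0}}^{\alpha^{-}}$ (valid since $\|v_{n}\|_{X_{0}}\leq 2r<1$), I obtain
\[
\bar{I}_{\lambda}(u_{n})\geq\bar{I}_{\lambda}(u_{0})+\frac{1}{2\theta}\|v_{n}\|_{X_{0}}^{2\theta}-\frac{C}{\alpha^{-}}\|v_{n}\|_{X_{0}}^{\alpha^{-}}+o(1).
\]
Because $\alpha^{-}>2\theta$ and $\|v_{n}\|_{X_{0}}\leq 2r$, shrinking $r$ once more renders the last two terms on the right nonnegative. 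Passing to the limit forces both $\bar{I}_{\lambda}(u_{0})\leq m_{\lambda}$ and $\|v_{n}\|_{X_{0}}\to 0$, so $u_{n}\to u_{0}$ strongly, $\bar{I}_{\lambda}(u_{0})=m_{\lambda}<0$, and $u_{0}\neq 0$ because $\bar{I}_{\lambda}(0)=0$.

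By Lemma \ref{thm1}, $u_{0}$ lies in the interior of $B_{r}$, so $u_{0}+t\varphi\in B_{r}$ for every $\varphi\in X_{0}$ and every sufficiently small $t>0$, giving $\bar{I}_{\lambda}(u_{0}+t\varphi)\geq \bar{I}_{\lambda}(u_{0})$. The main obstacle is extracting the Euler--Lagrange equation in the presence of the singular term, which makes $\bar{I}_{\lambda}$ non-differentiable and forces a one-sided variational argument. I would divide the above inequality by $t$, apply dominated convergence to the smooth Kirchhoff and cutoff critical pieces, and apply Fatou's lemma to the singular difference quotient; its integrand converges pointwise to $\lambda|u_{0}|^{-\gamma(x)-1}u_{0}\varphi$ on $\{u_{0}\neq 0\}$ and equals $t^{-\gamma(x)}|\varphi|^{1-\gamma(x)}/(1-\gamma(x))\to+\infty$ on $\{u_{0}=0\}\cap\{\varphi\neq 0\}$, so finiteness of the limit (required by minimality) forces $|\{u_{0}=0\}|=0$ and the integrability $|u_{0}|^{-\gamma(x)}\varphi\in L^{1}(\Omega)$. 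The limit of the variational inequality then reads
\[
\|u_{0}\|_{X_{0}}^{2(\theta-1)}\langle u_{0},\varphi\rangle\geq\lambda\int_{\Omega}|u_{0}|^{-\gamma(x)-1}u_{0}\varphi\,dx+\int_{\Omega}\tilde{f}(u_{0})\varphi\,dx,
\]
and running the same argument with $-\varphi$ (legitimate now that $u_{0}\neq 0$ a.e., so the concavity of $s\mapsto s^{1-\gamma(x)}$ on $(0,\infty)$ supplies the matching two-sided Fatou bound along each fibre) produces the reverse inequality. Equality for every $\varphi\in X_{0}$ then identifies $u_{0}$ as a weak solution of \eqref{main3}, completing the proof.
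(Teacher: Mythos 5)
Your minimizing-sequence argument --- convergence of the singular integral, the Br\'ezis--Lieb decomposition together with the elementary inequality $(a+b)^{\theta}\geq a^{\theta}+b^{\theta}$, and the conclusion that $u_{0}\in\overline{B_{r}}$ realizes $m_{\lambda}<0$ as an interior minimizer --- is essentially the same route the paper takes, only slightly more explicit about why the remainder terms are nonnegative. The genuine divergence is in the final step. The paper disposes of it in one sentence, \emph{``Since $\bar{I}_{\lambda}$ is a $C^{1}$ functional in $X_{0}\setminus\{0\}$, we have $\bar{I}_{\lambda}'(u_{0})=0$,''} but this is not available as stated: the singular piece of $\bar{I}_{\lambda}$ fails to be Gateaux differentiable at a $u\in X_{0}\setminus\{0\}$ that vanishes on a set of positive measure, since the would-be derivative $\int_{\Omega}|u|^{-\gamma(x)-1}u\varphi\,dx$ need not even be finite. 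You instead run a one-sided variational argument --- $\bar{I}_{\lambda}(u_{0}+t\varphi)\geq\bar{I}_{\lambda}(u_{0})$, divide by $t$, let $t\downarrow0$, and invoke Fatou for the singular difference quotient --- which simultaneously forces $|\{u_{0}=0\}|=0$, yields the membership $|u_{0}|^{-\gamma(\cdot)}\varphi\in L^{1}(\Omega)$ demanded by Definition~\ref{cutoff_weak_soln}, and produces the inequality $\geq$ in \eqref{weak_soln_mod_prob}. This is more honest and patches a real gap in the paper's reasoning. One caution on your final equality step: the monotonicity underlying your Fatou application (the difference quotient of a concave function increasing as $t\downarrow0$) holds fibrewise only where $u_{0}(x)$ and $u_{0}(x)+t\varphi(x)$ share a sign; since $s\mapsto|s|^{1-\gamma(x)}$ is concave on $(0,\infty)$ but convex on $(-\infty,0)$, and both $u_{0}$ and $\varphi$ may change sign, the clean way to close is to first establish the inequality for $\varphi\geq0$ and then pass to general $\varphi$ by the decomposition $\varphi=\varphi^{+}-\varphi^{-}$, using the integrability you have just obtained. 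This is a bookkeeping refinement, not a conceptual obstruction, and the proof is otherwise sound.
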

	\begin{proof}
	First, we show that there exists $u_0\in \bar{B}_r$ such that
		\begin{align}\label{aux0}
		\begin{split}
		\underset{n\rightarrow\infty}{\lim}\bar{I}_{\lambda}(u_n)&=\bar{I}_{\lambda}(u_0)=m_{\lambda}.
		\end{split}
		\end{align}
		
		Let $\{u_n\}$ be a minimizing sequence of $\bar{I}_{\lambda}$ chosen from the closed ball of radius $r$, i.e. $\bar{B}_r$. Hence it is a bounded sequence in $X_0$. Thus by the reflexivity of $X_0$ and Egoroff's theorem, there exists a subsequence $\{u_n\}$, such that
		\begin{align}\label{conv1}
		\begin{split}
		u_n&\rightharpoonup u_0~\text{in}~X_0,\\
		u_n&\rightarrow u_0~\text{in}~L^{p(x)}(\Omega)~\text{for any}~1<p(x)<2_{s^-}^*,\\
		u_n&\rightarrow u_0~\text{a.e. in}~\Omega,
		\end{split}
		\end{align}
		where $u_0\in \bar{B}_r$. Since $0<\gamma^-\leq\gamma^+<1$, we have by
		H\"{o}lder's inequality that for any $n$
		\begin{align}\label{sing_ineq}
		\begin{split}
		 \left|\int_{\Omega}|u_n|^{1-\gamma(x)}dx-\int_{\Omega}|u_0|^{1-\gamma(x)}dx\right|&\leq\int_{\Omega}\left||u_n|^{1-\gamma(x)}-|u_0|^{1-\gamma(x)}\right|dx 
		 \\
		&\leq\int_{\Omega}\left|u_n-u_0\right|^{1-\gamma(x)}dx 
				\leq C \|u_n-u_0\|_2^{1-\gamma^+}.
		\end{split}
		\end{align}
		
		Thus, using \eqref{conv1} in \eqref{sing_ineq}, we obtain
		\begin{align}\label{conv2}
		\begin{split}
		\underset{n\rightarrow\infty}{\lim}\int_{\Omega}|u_n|^{1-\gamma(x)}dx&=\int_{\Omega}|u_0|
		^{1-\gamma(x)}dx.\end{split}
		\end{align}
		
		Now, consider $v_n=u_n-u_0$. Then by  notation \eqref{notation1} in Remark \ref{Not1} and the Br\'{e}zis-Lieb Lemma (\cite{brezis1983relation}),
		\begin{eqnarray}\label{brez-lieb}
		\|U_n\|_{L^2(k(\cdot)dxdy)}&=&\|V_n\|_{L^2(k(\cdot)dxdy)}^2+\|U_0\|_{L^2(k(\cdot)dxdy)}^2+o(1),
		\end{eqnarray}
		as $n\rightarrow\infty$. Since $\{u_n\}\subset \bar{B}_r$,  by \eqref{brez-lieb} we have that $u_0, v_n\subset \bar{B}_r$. Now,
		by
		 Lemma \ref{thm1}, for every $u\in X_0$ with $\|u\|_{X_0}=r<1$, we obtain
		$$\frac{1}{2\theta}\|u\|_{X_0}^{2\theta}-\int_{\Omega}\tilde{F}(u)dx>0,$$
		so sincee $r<1$, for $n$ sufficiently large, we have
		\begin{eqnarray}\label{aux1}
		\frac{1}{2\theta}\|v_n\|_{X_0}^{2\theta}-\int_{\Omega}\tilde{F}(v_n)dx>0.
		\end{eqnarray}
		
		Therefore by  \eqref{conv2}, \eqref{brez-lieb}, \eqref{aux1} and since $\theta>1$, we have
		\begin{align}\label{aux2}
		\begin{split}
		m_{\lambda}&=\bar{I}_{\lambda}(v_n)+o(1)\\
		 &=\frac{1}{2\theta}(\|v_n\|_{X_0}^{2\theta}+\|u_0\|_{X_0}^{2\theta})-\int_{\Omega}\frac{\lambda}{1-\gamma(x)}|u_0|^{1-\gamma(x)}dx-\int_{\Omega}(\tilde{F}(v_n)+\tilde{F}(u_0))dx+o(1)\\
		&= \bar{I}_{\lambda}(u_0)+\frac{1}{2\theta}\|v_n\|_{X_0}^{2\theta}-\frac{1}{\alpha^-}\int_{\Omega}\eta\left(\frac{\|v_n\|_{X_0}^2}{2}\right)|v_n|^{\alpha(x)}dx+o(1)\\
		&= \bar{I}_{\lambda}(u_0)+\frac{1}{2\theta}\|v_n\|_{X_0}^{2\theta}-\frac{1}{\alpha^-}\int_{\Omega}|v_n|^{\alpha(x)}dx+o(1);~\text{since}~\|v_n\|_{X_0}\leq r<1\\
		&\geq \bar{I}_{\lambda}(u_0)+o(1)>m_{\lambda}.
		\end{split}
		\end{align}
		
		The last inequality is due to $u_0\in\bar{B_r}$. Therefore, $u_0$ is a local minimizer of $\bar{I}_{\lambda}$ with $\bar{I}_{\lambda}(u_0)=m_{\lambda}<0$. This also implies that $u_0\neq 0$. Thus the minimization problem \eqref{min_prob} has been solved. Since $\bar{I}_{\lambda}$ is a $C^1$ functional in $X_0\setminus\{0\}$,  we have $\bar{I}'_{\lambda}(u_0)=0$ which satisfies Definition \ref{weak_soln_mod_prob}.
	\end{proof}

	\subsection{Genus and its properties}
	In this section  we shall  prove Lemma \ref{ps limit} which establishes that  functional $\bar{I}_{\lambda}$ fulfills the $(PS)_c$ condition for $c<c_*$ where $c_*>0$. Then we shall show that all  hypotheses of the Clark theorem are satisfied and this will eventually yield our Theorem \ref{mainthm1}.

 Rabinowitz \cite{Rabinowitz1986} introduced the definition of {\it genus} $G(\cdot)$. We shall list some of the properties of  genus for the reader's convenience.
	Let $\Gamma$ denote the family of all closed subsets of $\mathcal{B}\setminus\{0\}$ that are symmetric with respect to the origin.
	\begin{lemma}\label{lemma genus}
		Let $A, B\in\Gamma$. Then
		\begin{enumerate}
			\item $A\subset B\Rightarrow G(A)\leq G(B)$.
			\item Suppose that  $A$ and $B$ are homeomorphic via an odd map. Then $G(A)=G(B)$.
			\item $G(\mathbb{S}^{N-1})=N$, where $\mathbb{S}^{N-1}$ is the $(N-1)$-sphere in $\mathbb{R}^{N}$.
			\item $G(A\cup B)\leq G(A)+\gamma(B)$.
			\item $G(A)<\infty\Rightarrow G (A\setminus B)\geq G(A)-G(B)$.		
			\item For every compact subset $A$ of $\mathcal{B}$, $G(A)<\infty,$ and there exists $\delta>0$ such that $G(A)=G(N_{\delta}(A)),$ where $N_{\delta}(A)=\{x\in \mathcal{B}:d(x,A)\leq\delta\}.$		
			\item Suppose $Y\subset \mathcal{B}$ is a subspace of $\mathcal{B}$ such that $codim(Y)=k$ and $G(A)>k$. Then $A\cap \mathcal{B}\neq\emptyset.$
		\end{enumerate}	
	\end{lemma}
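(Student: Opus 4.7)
The approach is to reduce each of the seven assertions to the defining property of the genus: $G(A)$ is the least positive integer $k$ for which there exists an odd continuous map $\varphi\colon A\to\mathbb{R}^{k}\setminus\{0\}$, with $G(A)=+\infty$ if no such $k$ exists and $G(\emptyset)=0$. With this setup, property (1) is immediate, since any odd continuous witness $\varphi\colon B\to\mathbb{R}^{k}\setminus\{0\}$ restricts to a witness on $A\subset B$. Property (2) then follows by composing in either direction: if $h\colon A\to B$ is an odd homeomorphism and $\varphi$ witnesses $G(B)$, then $\varphi\circ h$ witnesses an upper bound for $G(A)$, and symmetrically via $h^{-1}$.

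For (3), the inclusion $\mathbb{S}^{N-1}\hookrightarrow\mathbb{R}^{N}\setminus\{0\}$ yields $G(\mathbb{S}^{N-1})\leq N$, while the reverse inequality is precisely the content of the Borsuk--Ulam theorem, which forbids any odd continuous map $\mathbb{S}^{N-1}\to\mathbb{R}^{N-1}\setminus\{0\}$. Property (4) is established by extending odd continuous witnesses $\varphi_{A}$ and $\varphi_{B}$ (of dimensions $G(A)$ and $G(B)$ respectively) to continuous maps on all of $\mathcal{B}$ via the Tietze extension theorem, replacing each extension $\tilde{\varphi}$ by $x\mapsto\tfrac{1}{2}(\tilde{\varphi}(x)-\tilde{\varphi}(-x))$ to enforce oddness, and then concatenating into $(\tilde{\varphi}_{A},\tilde{\varphi}_{B})\colon A\cup B\to\mathbb{R}^{G(A)+G(B)}\setminus\{0\}$. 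Property (5) is a formal consequence of (4) together with (1), obtained by writing $A\subset(A\setminus B)\cup B$ and rearranging $G(A)\leq G(A\setminus B)+G(B)$.

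For (6), the compactness of $A$ together with $0\notin A$ permits, via a finite symmetric cover and a partition-of-unity argument, the construction of an odd continuous map from $A$ into some $\mathbb{R}^{k}\setminus\{0\}$, thereby showing $G(A)<\infty$; the continuity of the constructed map then allows it to be extended without hitting the origin to a symmetric neighborhood $N_{\delta}(A)$ for $\delta>0$ small enough, yielding $G(N_{\delta}(A))\leq G(A)$, while the reverse inequality comes from (1). For (7), suppose to the contrary that $A\cap Y=\emptyset$; choosing a topological complement $Z$ of $Y$ in $\mathcal{B}$ with $\dim Z=k$, the continuous odd projection $P\colon\mathcal{B}\to Z\cong\mathbb{R}^{k}$ then fails to vanish on $A$, forcing $G(A)\leq k$ and contradicting $G(A)>k$.

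The principal technical obstacle is property (6), where compactness must be leveraged carefully both to produce the finite-dimensional odd map realizing the genus and to guarantee its stability under a symmetric thickening. Property (3) also relies on the non-trivial input of Borsuk--Ulam, but this is a standard ingredient invoked off the shelf. The remaining items reduce to formal manipulations of odd continuous maps, with (4) serving as the combinatorial engine driving (5).
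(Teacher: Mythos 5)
The paper does not actually prove this lemma: it is quoted as a list of standard facts about the Krasnoselskii genus, with the reader referred to Rabinowitz's CBMS monograph for details. Your argument is the standard textbook proof and is essentially correct, so there is no divergence of substance to discuss. Two small points are worth flagging. In item (5), $A\setminus B$ need not be closed, so it need not lie in $\Gamma$ and the chain $A\subset(A\setminus B)\cup B$ followed by subadditivity is not quite legitimate as written; the correct textbook statement uses $\overline{A\setminus B}$ (and assumes $G(B)<\infty$ rather than $G(A)<\infty$), an imprecision already present in the paper's statement, which you simply inherit. In item (6), the compactness step deserves one more line: one covers the compact $A\not\ni 0$ by finitely many balls $B_{r_i}(x_i)$ chosen so that $B_{r_i}(x_i)\cap B_{r_i}(-x_i)=\emptyset$, observes that each symmetric pair $B_{r_i}(x_i)\cup B_{r_i}(-x_i)$ has genus one (it maps oddly onto $\{\pm 1\}\subset\mathbb{R}\setminus\{0\}$), and applies subadditivity $n$ times to get $G(A)\le n<\infty$; the thickening to $N_\delta(A)$ via a Tietze extension kept away from zero by uniform continuity on the compact $A$ is exactly as you say. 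You also correctly read past two typos in the paper's statement, namely $\gamma(B)$ for $G(B)$ in (4) and $A\cap\mathcal{B}$ for $A\cap Y$ in (7).
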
 
 
 Define $\Gamma_n$ to be the collection of closed and symmetric subsets of $\mathcal{B}$ such that none of which includes $0$ in it and the genus of each of these subsets is at least $n$.

 We will need the following Clark theorem \cite{kajikiya2005}.	
	
	\begin{theorem}\label{sym mountain}
			Let $\mathcal{B}$ be an infinite-dimensional Banach space and
			suppose that $I\in C^1(\mathcal{B},\mathbb{R})$ obeys the following conditions:
		\begin{itemize}
			\item[(i)] $I(u)=I(-u)$ is bounded below, $I(0)=0,$ and $I$ fulfills the $(PS)_c$ condition.
			\item[(ii)] For any $n\in\mathbb{N}$, there exists  $A_n\in\Gamma_n$ such that $\sup\limits_{u\in A_n}I(u)<0.$
		\end{itemize}
		Then $I$ accomodates a sequence of critical points $\{u_n\}$ such that $I(u_n)\leq 0$, $u_n\neq 0$ and $u_n\rightarrow 0$ as $n\rightarrow\infty$.
	\end{theorem}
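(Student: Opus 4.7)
The plan is to use the classical min-max scheme based on the Krasnoselskii genus. First I would define the Lusternik--Schnirelman type levels
$$c_n = \inf_{A \in \Gamma_n}\, \sup_{u \in A} I(u), \qquad n\in\mathbb N.$$
Condition (ii) immediately gives $c_n \le \sup_{A_n} I < 0$, while the lower boundedness of $I$ gives $c_n > -\infty$. Since $\Gamma_{n+1}\subset \Gamma_n$, the sequence $\{c_n\}$ is non-decreasing, hence converges to some $c_\infty \in [c_1,0]$.

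The heart of the argument is to show that each $c_n$ is a critical value of $I$, and more generally that if $c_n = c_{n+1} = \cdots = c_{n+k-1} =: c$, then the critical set $K_c = \{u \in \mathcal{B}: I(u)=c,\, I'(u)=0\}$ has genus at least $k$. I would argue by contradiction: if $G(K_c) \le k-1$, then by property (6) of Lemma~\ref{lemma genus} there is a symmetric closed neighbourhood $N_\delta(K_c)$ with the same genus. Because $c<0$ and $(PS)_c$ holds, a standard odd (equivariant) deformation lemma produces an odd homeomorphism $\eta$ and $\varepsilon>0$ with $\eta(\{I\le c+\varepsilon\}\setminus N_\delta(K_c))\subset \{I\le c-\varepsilon\}$. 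Picking $A\in \Gamma_{n+k-1}$ with $\sup_A I < c+\varepsilon$, property (5) of Lemma~\ref{lemma genus} gives $G(A\setminus N_\delta(K_c)) \ge (n+k-1)-(k-1)=n$, so $\eta(A\setminus N_\delta(K_c))\in \Gamma_n$ with $\sup I < c-\varepsilon=c_n-\varepsilon$, contradicting the definition of $c_n$. This yields a sequence $u_n$ of critical points with $I(u_n)=c_n$.

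It remains to show $c_n\to 0$ and $u_n\to 0$. For the former, I would argue by contradiction: if $c_\infty<0$, then for all large $n$ the levels $c_n$ lie in the compact interval $[c_\infty-\eta,c_\infty]\subset(-\infty,0)$, so by $(PS)_c$ the set $K=\bigcup_{n\ge N}K_{c_n}$ is precompact, hence contained in a set of finite genus. However, the multiplicity statement of the previous paragraph, applied to clusters of coinciding $c_n$'s, forces $G(K)$ to be arbitrarily large, a contradiction. For the convergence $u_n\to 0$, one invokes $(PS)_0$: any subsequence of $\{u_n\}$ admits a further subsequence converging to some $u^\star$ with $I(u^\star)=0$ and $I'(u^\star)=0$; one then shows via an equivariant neighbourhood/genus argument near any hypothetical nonzero limit that its existence would again contradict $c_n\nearrow 0$.

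The main obstacle is the equivariant deformation step combined with the genus bookkeeping needed to handle the multiplicity when several $c_n$ coincide; this is where the symmetry $I(u)=I(-u)$ enters crucially, forcing the deformation to be odd so that images of sets in $\Gamma_n$ remain in $\Gamma_n$. A secondary subtlety, specific to Kajikiya's strengthening of Clark's original theorem, is the norm convergence $u_n\to 0$ rather than the mere existence of infinitely many negative critical values; this relies on combining $(PS)_c$ for $c$ in a neighbourhood of $0$ with the monotone structure of $\{c_n\}$.
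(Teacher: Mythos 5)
The paper does not prove this statement: it is quoted as Kajikiya's theorem \cite{kajikiya2005} and used as a black box, so there is no internal argument to compare against. Assessing your sketch on its own merits, the minimax/genus framework you set up, and the odd-deformation bookkeeping for showing that each level $c_n$ is critical (with the genus jump when levels coincide), are indeed the standard and correct ingredients for this kind of result.

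Two of your later steps are not correct as written, though. First, your argument that $c_n\to 0$ only covers the case where infinitely many $c_n$ coincide; if the $c_n$ increase strictly to some $c_\infty<0$, the multiplicity estimate gives no lower bound on the genus of $\bigcup_n K_{c_n}$. The standard route is instead to run the odd deformation and property (6) of Lemma~\ref{lemma genus} directly at the limit level $c_\infty$: by $(PS)_{c_\infty}$ and $0\notin K_{c_\infty}$ one has $m:=G(K_{c_\infty})<\infty$, and deforming an $A\in\Gamma_{n+m}$ with $\sup_A I<c_\infty+\varepsilon$ below $c_n$ contradicts the definition of $c_n$. Second, and more seriously, your closing claim that a nonzero limit $u^\star$ of $\{u_n\}$ at level $0$ would ``contradict $c_n\nearrow 0$'' is wrong: Kajikiya's theorem expressly allows nonzero critical points at level $0$, and nothing in the hypotheses forbids $u^\star\neq 0$. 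The actual argument at this stage is a dichotomy, not a contradiction: either the level-$c_n$ critical points can be chosen to accumulate only at $0$, or $K_0\setminus\{0\}$ already contains infinitely many points accumulating at $0$, and \emph{either} branch delivers the sequence in the conclusion. Collapsing that dichotomy into a spurious contradiction is exactly the gap where Kajikiya's theorem genuinely goes beyond the classical Clark theorem, so the last and hardest step of your proposal is missing.
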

	  Another important {\it compactness} condition which will be used later is the Palais-Smale condition at energy level $c\in\mathbb{R}$, see  \cite[Definition 5.1.6]{repovs_et_al}.
	  We shall  now prove that $\bar{I}_{\lambda}$ satisfies the $(PS)_c$ condition under suitable range.
	\begin{lemma}\label{ps limit}
		There exists $\lambda_0>0$, such that for every $\lambda\in (0,\lambda_0)$ the functional $\bar{I}_{\lambda}$ fulfills  $(PS)_{c}$ condition, whenever $$c<\left(\frac{1}{2\theta}-\frac{1}{\beta}\right)S^{\alpha^*\theta/(\alpha^*-2\theta )}-\left(\frac{1}{2\theta}-\frac{1}{\beta}\right)^{-\frac{1-\delta}{2\theta-1+\delta}}\left[\lambda C\left(\frac{1}{1-\gamma^+}-\frac{1}{\beta}\right)\right]^{\frac{2\theta}{2\theta-1+\delta}}=c_*$$ where $c_*>0$.
	\end{lemma}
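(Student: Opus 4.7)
The plan is to run the standard Palais--Smale analysis, adapted to the Kirchhoff nonlocality and to the variable-exponent critical growth. Choose $\beta\in(2\theta,\alpha^{-})$; this is possible because $2\theta<\alpha^{-}$ by the hypothesis on $\alpha$ in \eqref{problem}. Let $\{u_n\}\subset X_0$ be a $(PS)_c$-sequence. From
$$\bar I_\lambda(u_n)-\frac{1}{\beta}\langle\bar I_\lambda'(u_n),u_n\rangle=\left(\frac{1}{2\theta}-\frac{1}{\beta}\right)\|u_n\|_{X_0}^{2\theta}-\lambda\int_\Omega\left(\frac{1}{1-\gamma(x)}-\frac{1}{\beta}\right)|u_n|^{1-\gamma(x)}dx+\int_\Omega\eta\left(\tfrac{\|u_n\|_{X_0}^{2}}{2}\right)\left(\frac{1}{\beta}-\frac{1}{\alpha(x)}\right)|u_n|^{\alpha(x)}dx,$$
and noting that the last integral is nonnegative since $\beta<\alpha^{-}\le\alpha(x)$ and that the singular integral is dominated by $C\|u_n\|_{X_0}^{1-\delta}$ via H\"older and Theorem~\ref{embedding} (with $\delta$ as in Remark~\ref{delta_0}), one obtains boundedness of $\{u_n\}$ in $X_0$ (since $2\theta>1-\delta$). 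By reflexivity and the compact subcritical embeddings, along a subsequence $u_n\rightharpoonup u$ in $X_0$, $u_n\to u$ in $L^{p(\cdot)}$ for every subcritical $p(\cdot)$, a.e.\ in $\Omega$, and $\|u_n\|_{X_0}^{2}\to M$ for some $M\ge\|u\|_{X_0}^{2}$.

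\emph{Concentration identity and contradiction.} Set $v_n=u_n-u$. The Brezis--Lieb Lemma yields
$$\|u_n\|_{X_0}^{2}=\|u\|_{X_0}^{2}+\|v_n\|_{X_0}^{2}+o(1),\qquad \int_\Omega|u_n|^{\alpha(x)}dx=\int_\Omega|u|^{\alpha(x)}dx+\int_\Omega|v_n|^{\alpha(x)}dx+o(1).$$
Up to extraction let $\|v_n\|_{X_0}^{2}\to L\ge 0$ and $\int_\Omega|v_n|^{\alpha(x)}dx\to k\ge 0$. Testing $\bar I_\lambda'(u_n)\to 0$ against $u_n-u$ and passing to the limit (the singular cross-term vanishes by the estimate of \eqref{sing_ineq} and the Kirchhoff prefactor tends to $M^{\theta-1}=(\|u\|_{X_0}^{2}+L)^{\theta-1}$) produces the concentration identity
$$(\|u\|_{X_0}^{2}+L)^{\theta-1}\,L=k.$$
Suppose $L>0$ for contradiction. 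Applying Theorem~\ref{embedding} to $v_n$ together with the standard modular/Luxemburg comparison $\min\{\|v_n\|_{\alpha(\cdot)}^{\alpha^{-}},\|v_n\|_{\alpha(\cdot)}^{\alpha^{+}}\}\le\int_\Omega|v_n|^{\alpha(x)}dx\le\max\{\|v_n\|_{\alpha(\cdot)}^{\alpha^{-}},\|v_n\|_{\alpha(\cdot)}^{\alpha^{+}}\}$ yields $S\,k^{2/\alpha^{*}}\le L$ for an effective exponent $\alpha^{*}\in\{\alpha^{-},\alpha^{+}\}$; combined with $L^{\theta}\le k$ (from the identity above and $\theta\ge 1$) this gives $k\ge S^{\alpha^{*}\theta/(\alpha^{*}-2\theta)}$ and $L\ge S^{\alpha^{*}/(\alpha^{*}-2\theta)}$. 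Rewriting $c=\lim_n[\bar I_\lambda(u_n)-\tfrac{1}{\beta}\langle\bar I_\lambda'(u_n),u_n\rangle]$ and using superadditivity $(\|u\|_{X_0}^{2}+L)^{\theta}\ge\|u\|_{X_0}^{2\theta}+L^{\theta}$ (valid since $\theta\ge 1$) together with the lower bound on $L^{\theta}$ one obtains
$$c\ge\left(\frac{1}{2\theta}-\frac{1}{\beta}\right)\|u\|_{X_0}^{2\theta}-\lambda\left(\frac{1}{1-\gamma^{+}}-\frac{1}{\beta}\right)C\|u\|_{X_0}^{1-\delta}+\left(\frac{1}{2\theta}-\frac{1}{\beta}\right)S^{\alpha^{*}\theta/(\alpha^{*}-2\theta)}.$$
Minimizing the first two terms in $\|u\|_{X_0}$ by a weighted Young inequality (the minimizer is of order $\lambda^{1/(2\theta-1+\delta)}$) produces precisely the second summand appearing in the definition of $c_*$, so $c\ge c_*$, contradicting $c<c_*$. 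Therefore $L=0$, i.e.\ $u_n\to u$ strongly in $X_0$.

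\emph{Main obstacle.} The delicate step is the concentration analysis: the identity $(\|u\|_{X_0}^{2}+L)^{\theta-1}L=k$ must be extracted while accounting for the nonlinear interaction of the Kirchhoff prefactor $\|\cdot\|_{X_0}^{2(\theta-1)}$ with the Brezis--Lieb splitting (note that the weak limit $u$ is only a solution of the \emph{modified} Kirchhoff equation with constant prefactor $(\|u\|_{X_0}^{2}+L)^{\theta-1}$, not of \eqref{main3} itself, until strong convergence is established), and the variable-exponent modular $\int|v_n|^{\alpha(x)}dx$ must be converted into a sharp Luxemburg-norm estimate with the correct effective exponent $\alpha^{*}$ before invoking the critical embedding. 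A secondary technical subtlety is to keep the PS sequence in the region $\{\|u\|_{X_0}^{2}\le 2l\}$ where $\eta\equiv 1$, justifying use of the simplified Fr\'echet derivative \eqref{frech_der_1}; this constrains the choice of the cutoff parameter $l$ in terms of $c_*$.
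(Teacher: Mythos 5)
Your proposal is correct and follows essentially the same route as the paper: Br\'ezis--Lieb splitting, the concentration identity linking the Kirchhoff prefactor limit with the limit modular $k$ (the paper's $a^{2(\theta-1)}\lim\|u_n-u\|_{X_0}^2=b^{\alpha^*}$ is exactly your $(\|u\|_{X_0}^2+L)^{\theta-1}L=k$), the Sobolev-constant lower bound, and Young's inequality applied to $\bar I_\lambda(u_n)-\frac1\beta\langle\bar I_\lambda'(u_n),u_n\rangle$ to reach $c\ge c_*$. The only notable variation is that you establish boundedness of the $(PS)_c$ sequence via the $\beta$-identity and the ordering $2\theta>1-\delta$, whereas the paper deduces it directly from coercivity of $\bar I_\lambda$ (using that $\eta\equiv 0$ for large norms); both are standard and the remainder of the argument coincides, down to the hand-waved assumption $a^2/2\le l$ that lets one treat $\eta\equiv 1$ and drop the $\eta'$ terms, which you rightly flag as the delicate point.
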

	\begin{proof}
		Since $\bar{I}_{\lambda}$ is not differentiable at $u=0$,  we first fix the sequences that will be considered in this lemma. Suppose $\{u_n\}\subset X_0$ is an eventually zero sequence, then it converges to $0$ and so we discard it immediately. Suppose $\{u_n\}\subset X_0$ is a sequence with infinitely many terms equal to $0$, then we choose a subsequence of $\{u_n\}$ with all terms nonzero. Thus, without loss of generality,  we shall  assume that $\{u_n\}$ is such that $u_n\neq 0$ for every $n\in\mathbb{N}$ and $u_n\nrightarrow 0$ in $X_0$ as $n\rightarrow \infty$. Let this sequence $\{u_{n}\}$ be such that
		\begin{align}\label{ps 1}
		\bar{I}_{\lambda}(u_{n})\rightarrow c~\text{and}~ \bar{I}_{\lambda}^{'}(u_{n})\rightarrow0,
		\quad
		\hbox{as}
		\quad n\rightarrow\infty. 
		\end{align}	
		
		We shall show that $\{u_n\}$ is bounded in $X_0$. Indeed, observe that $\bar{I}_{\lambda}$ is a coercive functional. This is because
		\begin{align}\label{coercive1}
		\begin{split}
        \underset{\|u\|_{X_0}\rightarrow\infty}{\lim}\frac{\bar{I}_{\lambda}(u)}{\|u\|_{X_0}}&=\underset{\|u\|_{X_0}\rightarrow\infty}{\lim}\left(\frac{1}{2}\|u\|_{X_0}^{2\theta-1}-\frac{1}{\|u\|_{X_0}}\int_{\Omega}\frac{|u|^{1-\gamma(x)}}{1-\gamma(x)}dx-\frac{1}{\|u\|_{X_0}}\int_{\Omega}\eta\left(\frac{\|u\|_{X_0}^2}{2}\right)\frac{|u|^{\alpha(x)}}{\alpha(x)}dx\right)\\
        &=\underset{\|u\|_{X_0}\rightarrow\infty}{\lim}\left(\frac{1}{2}\|u\|_{X_0}^{2\theta-1}-\frac{1}{\|u\|_{X_0}}\int_{\Omega}\frac{|u|^{1-\gamma(x)}}{1-\gamma(x)}dx\right),~\text{since by the definition of}~\eta\\
        &\geq\underset{\|u\|_{X_0}\rightarrow\infty}{\lim}\left(\frac{1}{2}\|u\|_{X_0}^{2\theta-1}-\frac{C'}{\|u\|_{X_0}}\frac{\|u\|_{X_0}^{1-\gamma ^-}}{1-\gamma^+}\right)=\infty.
        \end{split}
		\end{align}
		
		Moreover, since $\bar{I}_{\lambda}(u_{n})\rightarrow c$ as $n\rightarrow\infty$ for some $0<c<c_*$, the sequence $\{u_n\}$ is bounded. Suppose not, then there exists a subsequence of $\{u_n\}$, such that $\|u_n\|_{X_0}\rightarrow\infty$ as $n\rightarrow\infty$. Then from \eqref{coercive1} we obtain
		\begin{align}\label{coercive2}
		\begin{split}
		0&=\underset{\|u\|_{X_0}\rightarrow\infty}{\lim}\frac{\bar{I}_{\lambda}(u_n)}{\|u_n\|_{X_0}}\\
		 &\geq\underset{\|u\|_{X_0}\rightarrow\infty}{\lim}\left(\frac{1}{2}\|u_n\|_{X_0}^{2\theta-1}-\frac{C'}{\|u_n\|_{X_0}}\frac{\|u_n\|_{X_0}^{1-\gamma ^-}}{1-\gamma^+}\right)=\infty,
		\end{split}
		\end{align}
		which is absurd. Hence, we get the boundedness of the sequence $\{u_{n}\}$ in $X_0$. 
		
		Since the space $X_0$ is reflexive,  there exists a subsequence of $\{u_{n}\}$ and $u\in X_0$ such that as $n\rightarrow\infty$ we have
		\begin{align}\label{conv_D}\tag{D}
		\begin{split}
		&u_{n}\rightharpoonup u ~\text{weakly in}~ X_0,~\|u_n\|_{X_0}\rightarrow a,\\
		&u_{n}\rightarrow u ~\text{strongly in}~ L^{r(\cdot)}(\Omega)~\text{for}~ 1\leq r(x)<2_{s^-}^{*},\\
		&u_{n}(x)\rightarrow u(x) ~\text{a.e. in}~ \Omega,\\
		&u_{n}\rightharpoonup u ~\text{weakly in}~L^{\alpha(x)}(\Omega)~\text{if}~(A_2)~\text{is satisfied},\int_{\Omega}|u_n-u|^{\alpha(x)}dx\rightarrow b^{\alpha^*},
		\end{split}
		\end{align}
		where  \begin{equation}\label{delta_3}\alpha^*=\begin{cases}
		(1-t_0)\alpha^-+t_0\alpha^+, & b>1\\
		(1-t_0)\alpha^++t_0\alpha^-, & b<1
		\end{cases}\end{equation}
		for some $t_0\in(0,1)$ and $\|u_n-u\|_{\alpha(\cdot)}\rightarrow b$. If $a=0$, then it is an immediate conclusion that $u_n\rightarrow 0$ in $X_0$ and hence $u_n\rightarrow 0$ in $L^{\alpha(x)}(\Omega)$ as $n\rightarrow\infty$ (by Theorem \ref{embedding}). Therefore we discard the case of $a=0$.
		
		 Since by Remark \ref{key_obs} we are interested in solutions which obey $$\frac{\|u\|_{X_0}^2}{2}\leq l,$$  we further make the assumption $$0<\frac{a^2}{2}\leq l.$$ Note that this is where we are given a possible choice of $l$ in the definition of $\eta$. 
		 
		 Moreover, by the weak convergence we have
		\begin{align*}
		\lim\limits_{n\rightarrow+\infty}\iint_{Q}\frac{(u_{n}(x)-u_{n}(y))(v(x)-v(y))}{|x-y|^{N+2s(x,y)}}dxdy
		&=\iint_{Q}\frac{(u(x)-u(y))(v(x)-v(y))}{|x-y|^{N+2s(x,y)}}dxdy.
		\end{align*}
		
		Further, we have
		\begin{eqnarray}\label{conv sing}
		\lim\limits_{n\rightarrow+\infty}\int_{\Omega}|u_n|^{-\gamma(x)}vdx&=&
		\int_{\Omega}|u|^{-\gamma(x)}vdx<\infty.\end{eqnarray}
		
		The finiteness of the limit can be established by  Remark \ref{sing_conv_pf}.
		Now consider
		\begin{align}\label{PS_1_AUX}
		\begin{split}
		o(1)=&\langle \bar{I}'_{\lambda}(u_n),u_n-u \rangle\\
		=&\|u_n\|_{X_0}^{2(\theta-1)}\langle u_n, u_n-u\rangle-\lambda\int_{\Omega}|u_n|^{-\gamma(x)-1}u_n(u_n-u)dx-\int_{\Omega}\eta\left(\frac{\|u_n\|_{X_0}^2}{2}\right)|u_n|^{\alpha(x)-2}u_n(u_n-u)dx\\
		&-\eta'\left(\frac{\|u_n\|_{X_0}^2}{2}\right)\langle u_n,u_n-u\rangle\int_{\Omega}\frac{|u_n|^{\alpha(x)}}{\alpha(x)}dx
		\end{split}
		\end{align}
	\begin{align}
	\begin{split}
		=&\|u_n\|_{X_0}^{2(\theta-1)}\langle u_n, u_n-u\rangle-\lambda\int_{\Omega}|u_n|^{-\gamma(x)-1}u_n(u_n-u)dx-\int_{\Omega}\eta\left(\frac{a^2}{2}\right)|u_n|^{\alpha(x)-2}u_n(u_n-u)dx\\
		&-\eta'\left(\frac{a^2}{2}\right)\langle u_n,u_n-u\rangle\int_{\Omega}\frac{|u_n|^{\alpha(x)}}{\alpha(x)}dx\\
		=&\|u_n\|_{X_0}^{2(\theta-1)}(a^2-\|u\|_{X_0}^2)-\int_{\Omega}|u_n|^{\alpha(x)}dx+\int_{\Omega}|u|^{\alpha(x)}dx+o(1)\\
		=&\|u_n\|_{X_0}^{2(\theta-1)}\|u_n-u\|_{X_0}^2-\int_{\Omega}|u_n-u|^{\alpha(x)}dx+o(1).
		\end{split}
		\end{align}
		
		Note that, we have again used  Remark \ref{sing_conv_pf} to tackle the singular term and that we  get  $\underset{n\rightarrow\infty}{\lim}\int_{\Omega}uu_n^{-\gamma(x)}dx=\int_{\Omega}u^{1-\gamma(x)}dx$. Thus we derive
		\begin{align}\label{PS_2_AUX}
		\begin{split}
		 a^{2(\theta-1)}\underset{n\rightarrow\infty}{\lim}\|u_n-u\|_{X_0}^2&=\underset{n\rightarrow\infty}{\lim}\int_{\Omega}|u_n-u|^{\alpha(x)}dx=b^{\alpha^*}.
		\end{split}
		\end{align}
		
		If $b=0$, then from \eqref{conv_D} and \eqref{PS_2_AUX} we obtain $u_n\rightarrow u$ in $X_0$ as $n\rightarrow\infty$, since $a>0$. Therefore we need to show that $b=0$. Let us assume to the contrary, that $b>0$. Then,  using \eqref{sobolev_constant}, \eqref{conv_D}, and \eqref{PS_2_AUX}, we would get
		\begin{align}\label{PS_3_AUX}
		\begin{split}
		0\leq a^{2(\theta-1)}\underset{n\rightarrow\infty}{\lim}\|u_n-u\|_{X_0}^2&=b^{\alpha^*}.
		\end{split}
		\end{align}
		
		Next, from \eqref{PS_3_AUX} we would have
		\begin{align}\label{PS_4_AUX}
		\begin{split}
		Sa^{2(\theta-1)}b^2&\leq b^{\alpha^*},\\
		a^{2(\theta-1)}(a^2-\|u\|_{X_0}^2)&=b^{\alpha^*}.
		\end{split}
		\end{align}
		
		It would also follow from \eqref{PS_4_AUX} that
		\begin{align}\label{PS_5'_AUX}
		\begin{split}
		b&\geq S^{\frac{1}{\alpha^*-2}}a^{\frac{2(\theta-1)}{\alpha^*-2}}
		\end{split}
		\end{align}
		and from \eqref{PS_4_AUX} that
		\begin{align}\label{PS_5_AUX}
		\begin{split}
		a^2\geq Sb^2&\geq S(S^{\frac{2}{\alpha^*-2}}a^{\frac{4(\theta-1)}{\alpha^*-2}})\\
		&=S^{\frac{\alpha^*}{\alpha^*-2}}a^{\frac{4(\theta-1)}{\alpha^*-2}}.
		\end{split}
		\end{align}
		
		This would further lead to
		\begin{align}\label{PS_5''_AUX}
		\begin{split}
		a^{\frac{2(\alpha^*-2\theta)}{\alpha^*-2}}&\geq S^{\frac{\alpha^*}{\alpha^*-2}}~\text{which would imply that}\
		a^2\geq S^{\frac{\alpha^*}{\alpha^*-2\theta}}.
		\end{split}
		\end{align}
		
		We can assume that no subsequence of the bounded (PS) sequence $\{u_n\}$ is such that $\|u_n-u\|_{X_0}\rightarrow 0$, as $n\rightarrow\infty,$ whenever $0<c<c_*$. Let us further choose $\beta$ such that $1-\gamma^-<1<2\theta<\beta<\alpha^-$ and consider the following
		\begin{align}\label{PS_8_AUX}
		\begin{split}
		\bar{I}_{\lambda}(u_n)- \frac{1}{\beta}\langle\bar{I}'_{\lambda}(u_n),u_n\rangle\geq& \left(\frac{1}{2\theta}-\frac{1}{\beta}\right)\|u_n\|_{X_0}^{2\theta}+\lambda\left(\frac{1}{\beta}-\frac{1}{1-\gamma^+}\right)\int_{\Omega}|u_n|^{1-\gamma(x)}dx\\
		 &+\left(\frac{1}{\beta}-\frac{1}{\alpha^-}\right)\int_{\Omega}\eta\left(\frac{\|u_n\|_{X_0}^2}{2}\right)|u_n|^{\alpha(x)}dx+\frac{1}{\beta}\|u_n\|_{X_0}^2\eta'\left(\frac{\|u_n\|_{X_0}^2}{2}\right)\int_{\Omega}\frac{|u_n|^{\alpha(x)}}{\alpha(x)}dx\\
		 \geq&\left(\frac{1}{2\theta}-\frac{1}{\beta}\right)\|u_n\|_{X_0}^{2\theta}+\lambda\left(\frac{1}{\beta}-\frac{1}{1-\gamma^+}\right)\int_{\Omega}|u_n|^{1-\gamma(x)}dx\\
		 &+\frac{1}{\beta}\|u_n\|_{X_0}^2\eta'\left(\frac{\|u_n\|_{X_0}^2}{2}\right)\int_{\Omega}\frac{|u_n|^{\alpha(x)}}{\alpha(x)}dx.
		\end{split}
		\end{align}
		
		On passing to the limit $n\rightarrow\infty$ in \eqref{PS_8_AUX} and using $0<\frac{a^2}{2}\leq l$, equation \eqref{ps 1},
		  the Br\'{e}zis-Lieb Lemma, and the Young inequality, we would get
				\begin{align*}
		\begin{split}
		c\geq& \left(\frac{1}{2\theta}-\frac{1}{\beta}\right)(a^{2\theta}+\|u\|_{X_0}^{2\theta})-\lambda\left(\frac{1}{1-\gamma^+}-\frac{1}{\beta}\right)\int_{\Omega}|u|^{1-\gamma(x)}dx+\frac{1}{\beta}a^2\eta'\left(\frac{a^2}{2}\right)\underset{n\rightarrow\infty}{\lim}\int_{\Omega}\frac{|u_n|^{\alpha(x)}}{\alpha(x)}dx\\
		 \geq&\left(\frac{1}{2\theta}-\frac{1}{\beta}\right)(a^{2\theta}+\|u\|_{X_0}^{2\theta})-C\lambda\left(\frac{1}{1-\gamma^+}-\frac{1}{\beta}\right)\|u\|_{X_0}^{1-\delta}\\
		 \geq& \left(\frac{1}{2\theta}-\frac{1}{\beta}\right)(a^{2\theta}+\|u\|_{X_0}^{2\theta})-\left(\frac{1}{2\theta}-\frac{1}{\beta}\right)\|u\|_{X_0}^{2\theta}-\left(\frac{1}{2\theta}-\frac{1}{\beta}\right)^{-\frac{\delta}{2\theta-1+\delta}}\left[\lambda C\left(\frac{1}{1-\gamma^+}-\frac{1}{\beta}\right)\right]^{\frac{2\theta}{2\theta-1+\delta}}
		 \end{split}
		 \end{align*}
		 \begin{align}\label{PS_9_AUX}
		 \begin{split}
		\geq& \left(\frac{1}{2\theta}-\frac{1}{\beta}\right)a^{2\theta}-\left(\frac{1}{2\theta}-\frac{1}{\beta}\right)^{-\frac{\delta}{2\theta-1+\delta}}\left[\lambda C\left(\frac{1}{1-\gamma^+}-\frac{1}{\beta}\right)\right]^{\frac{2\theta}{2\theta-1+\delta}}\\
		\geq& \left(\frac{1}{2\theta}-\frac{1}{\beta}\right)S^{\alpha^*\theta/(\alpha^*-2\theta )}-\left(\frac{1}{2\theta}-\frac{1}{\beta}\right)^{-\frac{1-\delta}{2\theta-1+\delta}}\left[\lambda C\left(\frac{1}{1-\gamma^+}-\frac{1}{\beta}\right)\right]^{\frac{2\theta}{2\theta-1+\delta}}\\
		=&c_*.
		\end{split}
		\end{align}
		
		This is a contradiction to the assumption that $c<c_*$. Also, for a sufficiently small $\lambda$, say $\lambda_0$, one can get $c_*>0$. Therefore we conclude that $b=0$.
		Thus we have obtained $u_n\rightarrow u$ in $X_0$ as $n\rightarrow\infty$. So the functional $\bar{I}_{\lambda}$ fulfills the $(PS)_c$ condition whenever $c<c_*$.
	\end{proof}
	\begin{remark}\label{positive_ps_energy}
		Observe that since $c_*>0$, we can use  Theorem \ref{sym mountain}.
	\end{remark}
	\begin{remark}\label{scheme of seq}
	Notice that since $c_*>0$, there is a possibility of the occurrence of the situation where the sequence obeys the following
\begin{align}\label{zero ps}
\bar{I}_{\lambda}(u_n)\rightarrow 0,~\bar{I}'_{\lambda}(u_n)\rightarrow 0,
\end{align}	
as $n\rightarrow\infty$. This implies that there exists a subsequence of $\{u_n\}$ such that $u_n\rightarrow v_0$ in $X_0$ as $n\rightarrow\infty$. This forces us to have $v_0\neq 0$ a.e. in $\Omega$. For if $v_0=0$ over a subset of $\Omega$ of nonzero Lebesgue measure, then we shall  have $
\bar{I}_{\lambda}(u_n)\rightarrow 0,~\bar{I}'_{\lambda}(u_n)\rightarrow -\infty,$ which is a contradiction to \eqref{zero ps}.	
	\end{remark}	
	
	\section{Proofs of the main results}
	
\subsection{Proof of Theorem \ref{embedding}}
		An important embedding result in the literature is due to Ho and Kim \cite{HK}, who considered a Sobolev space of constant order but with a variable exponent. We will prove the embedding result for a variable order but with a constant exponent.

					 Using the Closed graph theorem,	
			  we only need to establish
			   that $W^{s(\cdot),2}(\Omega)\subset L^{\alpha(\cdot)}(\Omega)$.
			   To this end, let $u\in W^{s(\cdot),2}(\Omega)\backslash\{0\}$ be arbitrary and fixed. We shall  show that $u\in L^{\alpha(\cdot)}(\Omega)$, namely,
			\begin{equation}\label{j}
			\int_{\Omega}|u|^{\alpha(x)}dx<\infty.
			\end{equation}
			
			Throughout the proof, $K>0$  will denote different constants independent of $u$.
			We cover $\overline{\Omega}$ by $\{B_{\varepsilon_{j}}(x_{j})\}_{j=1}^{k}$ with $x_{j}\in \Omega$ and $\varepsilon_{j}\in (0,1)$. Here $\Omega_{j}:=B_{\varepsilon_{j}}(x_{j})\bigcap\Omega$ are Lipschitz domains and the condition in $(A_{2})$ is satisfied for all $j\in\{1,...,k\}$.
			We fix $j\in\{1,...,k\}$ and denote $s_{j}:=\inf_{(y,z)\in\Omega_{j}\times\Omega_{j}}s(y,z)$,  $\alpha_{j}:=\sup_{x\in\Omega_{j}}\alpha(x).$ From $(A_{2})$ and the choice of $\varepsilon_{j}$, we get
			$$\alpha_{j}\leq\frac{2N}{N-2s_{j}}:=2_{s,j}^{*}.$$
			
			By the above expression and the well known embedding theorem in the constant exponent case (refer \cite{11}), we obtain
			\begin{eqnarray*}
				\int_{\Omega_{j}}|u(x)|^{\alpha_{j}}dx\leq K\left(\int_{\Omega_{j}}|u(x)|^{2}dx+\iint_{\Omega_{j}\times\Omega_{j}}\frac{|u(x)-u(y)|^{2}}{|x-y|^{N+2s_{j}}}dxdy\right)^{\frac{\alpha_{j}}{2}}.
			\end{eqnarray*}

			Besides,  we note that
			\begin{align}\label{c2}
			 \iint_{\Omega_{j}\times\Omega_{j}}&\frac{|u(x)-u(y)|^{2}}{|x-y|^{N+2s(x,y)}}dxdy\geq\iint_{\Omega_{j}\times\Omega_{j}}\frac{|u(x)-u(y)|^{2}}{|x-y|^{N+2s_{j}}}dxdy\nonumber\\
			&=\iint_{\Omega_{j}\times\Omega_{j}}\left|\frac{|u(x)-u(y)|}{|x-y|^{2s_{j}}}\right|^{2}\frac{1}{|x-y|^{N-2s_{j}}}dxdy.
			\end{align}

A simple calculation leads to
			\begin{align*}
			\int_{\Omega_{j}}|u(x)|^{\alpha(x)}dx&\leq K\left(1+\int_{\Omega_{j}}|u(x)|^{2}dx+\iint_{\Omega_{j}\times\Omega_{j}}\frac{|u(x)-u(y)|^{2}}{|x-y|^{N+2s(x,y)}}dxdy\right)^{\frac{\alpha_{j}}{2}}\\
			&\leq K\left(1+\int_{\Omega}|u(x)|^{2}dx+\iint_{\Omega\times\Omega}\frac{|u(x)-u(y)|^{2}}{|x-y|^{N+2s(x,y)}}dxdy\right)^{\frac{\alpha^{+}}{2}}.
			\end{align*}
			
			On summing up over the index $j$ we get
			\begin{align*}
			\int_{\Omega}|u(x)|^{\alpha(x)}dx\leq K\left(1+\int_{\Omega}|u(x)|^{2}dx+\iint_{\Omega\times\Omega}\frac{|u(x)-u(y)|^{2}}{|x-y|^{N+2s(x,y)}}dxdy\right)^{\frac{\alpha^{+}}{2}}<\infty,
			\end{align*}
			and so \eqref{j} holds. Therefore $\|u\|_{\alpha(\cdot)}\leq C\|u\|_{W^{s(\cdot),2}(\Omega)}$ holds.
		\qed 

\subsection{Proof of Theorem \ref{mainthm1}}

   We now prove that problem \eqref{main3} has infinitely many solutions.
	
\begin{proof}[Proof of Theorem \ref{mainthm1}]
For each $m\in\mathbb{N}$, choose $Y_m\subset X_0$, a finite-dimensional subspace of dimension $m$. Therefore, by $(2)$ and $(3)$ of Lemma \ref{lemma genus}, we have that $G(\mathbb{S}^{m-1})=m$ and therefore $G(r\mathbb{S}^{m-1})=m$.

 Choosing a sufficiently small $r$, say $r_0=r_0(Y_m)$, such that for every $u\in r_0\mathbb{S}^{m-1}$, i.e. $\|u\|_{X_0}=r_0<\sqrt{2l}$, according to Theorem \ref{embedding}, we have
	\begin{align}\label{sym_hypo}
	\begin{split}
	 \bar{I}_{\lambda}(u)\leq&\frac{1}{2\theta}\|u\|_{X_0}^{2\theta}-\frac{\lambda}{1-\gamma^-}\int_{\Omega}|u|^{1-\gamma(x)}dx-\frac{1}{\alpha^+}\int_{\Omega}\eta\left(\frac{\|u\|_{X_0}^2}{2}\right)|u|^{\alpha(x)}dx\\
    =&\frac{1}{2\theta}\|u\|_{X_0}^{2\theta}-\frac{\lambda}{1-\gamma^-}\int_{\Omega}|u|^{1-\gamma(x)}dx-\frac{1}{\alpha^+}\int_{\Omega}|u|^{\alpha(x)}dx\\
	 \leq&\frac{1}{2\theta}\|u\|_{X_0}^{2\theta}-\frac{C''\lambda}{1-\gamma^-}\|u\|_{X_0}^{1-\gamma^-}-\frac{C'''}{\alpha^+}\|u\|_{X_0}^{\alpha^+}dx<0,~\text{due to the ordering}~1-\gamma^-<2\theta< \alpha^-.
	\end{split}
	\end{align}
	
	The hypotheses of Theorem \ref{sym mountain}, namely  $(i)-(ii)$ are all satisfied. Note that the hypothesis $(i)$ holds since $\bar{I}_{\lambda}(0)=0$, $\bar{I}_{\lambda}$ is even. The hypothesis $(ii)$ holds by the discussion leading to \eqref{sym_hypo}. Thus, by Theorem \ref{sym mountain} we establish the existence of infinitely many critical points $\{u_m\}$ for functional $\bar{I}_{\lambda}$ which in turn proves, by Remark \ref{key_obs},  that problem \eqref{main3} has infinitely many solutions, and hence we have established the existence of infinitely many solutions for problem \eqref{problem}.
	\end{proof}
	
	\subsection{Proof of Theorem \ref{bdd_thm}}
	  Finally, we address the boundedness of  solutions to problem \eqref{problem}.
	\begin{proof}[Proof of Theorem \ref{bdd_thm}]
	The aim of this subsection is to establish the boundedness of the solution to the problem \eqref{problem}. We shall sketch the ideas without giving the details as the argument is pretty standard one. We shall use a {\it bootstrap} argument in order to achieve this by assuming the integrability of certain order $p>1$, to begin with. 
	
	Let $0 \leq u\leq X_0$ be a solution such that $|\{x:u(x)=0\}|=0$.
	We consider the set $\Omega'=\{x\in\Omega:u(x)>1\}$ and thus by the positivity of $u$, we have $u=u^+>0$ a.e. in $\Omega$. Let $u\in L^{p}(\Omega)$ for $p>1$. On testing with $u^p$ in \eqref{weak_soln} we obtain the following
	\begin{align}\label{bddness}
	\begin{split}
	&\frac{(p+1)^2}{4p}\|u\|_{X_0}^{2(\theta-1)}\langle u,u^{p}\rangle,~\text{(see ~\cite[Lemma C.2]{Brasco})}\\
	&=\left(\lambda\int_{\Omega'}|u|^{p-\gamma(x)}dx+\int_{\Omega'}|u|^{\alpha(x)-1+p}dx\right)\frac{(p+1)^2}{4p}\\
	&\leq\left(\lambda\int_{\Omega'}|u|^{p}(1+|u|^{\alpha(x)-1})dx\right)\frac{(p+1)^2}{4p};~\text{since in}~\Omega'~\text{we have}~u>1\\
	&\leq\left(\lambda\int_{\Omega'}|u|^{p}|u|^{\alpha^+}dx\right)\frac{(p+1)^2}{4p}\\
	&\leq\lambda p C''\|u\|_{\beta^*}^{\alpha^+}\|u^p\|_t;~\text{due to H\"{o}lder's inequality},
	\end{split}
	\end{align}
	where $t=\frac{\beta^*}{\beta^*-\alpha^+}$ for some $\beta^*>\alpha^+>1$.
	We further have
	\begin{align}\label{bddness1}
	\begin{split}
	C'\|u\|_{X_0}^{2(\theta-1)}\|u^{\frac{p}{2}}\|_{\beta^*}^2&\leq C'\|u\|_{X_0}^{2(\theta-1)}\|u^{\frac{p+1}{2}}\|_{\beta^*}^2\\
	&\leq C'\|u\|_{X_0}^{2(\theta-1)}\iint_{\Omega'\times\Omega'}\frac{\left|u(x)^{\frac{(p+1)}{2}}-u(y)^{\frac{(p+1)}{2}}\right|^2}{|x-y|^{N+2s(x,y)}}dxdy.
	\end{split}
	\end{align}
	
	By \eqref{bddness} and \eqref{bddness1}, we have the following
	\begin{align}
	C'\|u\|_{X_0}^{2(\theta-1)}\|u^{\frac{p}{2}}\|_{\beta^*}^2&\leq \lambda p C''\|u\|_{\beta^*}^{\alpha^+}\|u^p\|_t.
	\end{align}
	
We fix $\beta^*>1$, set $\Theta=\frac{\beta^*}{2t}>1,$ and pick a suitable $t$ and $\tau=tp$ to get
	\begin{align}\label{moser1}
	\|u\|_{\Theta\tau}&\leq (pC)^{t/\tau}\|u\|_{\tau},
	\end{align}
	where $C=\frac{\lambda  C''\|u\|_{\beta^*}^{\alpha^+}}{\|u\|^{2(\theta-1)}}$  corresponds to the chosen solution $u$. 
	
	We now iterate with $\tau_0=t$, $\tau_{n+1}=\Theta\tau_n=\Theta^{n+1}t$. After $n$ iterations, the inequality \eqref{moser1} yields
	\begin{align}\label{moser2}
	\|u\|_{\tau_{n+1}}&\leq C^{\sum\limits_{i=0}^{n}\frac{t}{\tau_i}}\prod\limits_{i=0}^{n}\left(\frac{\tau_i}{t}\right)^{\frac{t}{\tau_i}}\|u\|_t.
	\end{align}
	
	By using the fact that $\Theta>1$ and employing the following {\it scheme} of iteration: $\tau_0=t$, $\tau_{n+1}=\Theta\tau_n=\Theta^{n+1}t$,
	we have $$\sum\limits_{i=0}^{\infty}\frac{t}{\tau_i}=\sum\limits_{i=0}^{\infty}\frac{1}{\Theta^i}=\frac{\Theta}{\Theta-1}$$ and
	$$\prod\limits_{i=0}^{\infty}\left(\frac{\tau_i}{t}\right)^{\frac{t}{\tau_i}}=\Theta^{\frac{\Theta^2}{(\Theta-1)^2}}.$$
	
	Therefore, on passing to the limit $n\rightarrow\infty$ in \eqref{moser2}, we have
	\begin{align}\label{moser3}
	\|u\|_{\infty}&\leq C^{\frac{\Theta}{\Theta-1}}\Theta^{\frac{\Theta^2}{(\Theta-1)^2}}\|u\|_t,
	\end{align}
	hence $u\in L^{\infty}(\Omega)$.
	\end{proof}
\section{Epilogue}
	We shall complete the paper with the following observations.

	\begin{remark}\label{sing_conv_pf}		
A similar remark as in \cite{chou_saoud} can also be made here in order to establish the following exchanging of the integral and the limit for the singular term.
		\begin{align}\label{conv_1}
\begin{split}
		\lim\limits_{n\rightarrow+\infty}\int_{\Omega}|u_n|^{-\gamma(x)-1}uvdx=\int_{\Omega}|u|^{-\gamma(x)-1}uvdx~\text{for every}~v\in X_0,
\end{split}
		\end{align}
		where $u$ is a pointwise limit of $\{u_n\}$, a.e. in $\Omega$.
\end{remark}
		
		As mentioned in  Section \ref{3.1} after  Definition \ref{defn_weak_soln_main_prob},  we shall  give the derivative of the singular term of $I_{\lambda}$ in a more clear way.

		\begin{remark}\label{derivative_explanation}
		The derivative of the functional
		$$J(u)=\int_{\Omega}|u|^{1-\gamma(x)}dx$$
		for a nonzero $u\in X_0$ is given by
		$$\langle J'(u),\varphi\rangle=\int_{\Omega}(1-\gamma(x))u|u|^{-1-\gamma(x)}\varphi dx~\text{for every}~\varphi\in X_0.$$
	\end{remark}
		  Indeed, we have $$\langle J'(u),\varphi\rangle=\int_{\Omega}(1-\gamma(x))|u|^{-\gamma(x)}\sgn(u)\varphi dx~\text{for every}~\varphi\in X_0.$$ 
		  
		  However, we also have
		\begin{align}
		\begin{split}
		\int_{\Omega}(1-\gamma(x))|u|^{-\gamma(x)-1}|u|\sgn(u)\varphi dx&=\int_{\Omega}(1-\gamma(x))|u|^{-\gamma(x)-1}u(\sgn(u))^2\varphi dx\\
		&=\int_{\Omega}(1-\gamma(x))|u|^{-\gamma(x)-1}u\varphi dx~\text{for every}~\varphi\in X_0.
		\end{split}
		\end{align}
		
		Therefore,
		\begin{align}
		\langle J'(u),\varphi\rangle&=\int_{\Omega}(1-\gamma(x))|u|^{-\gamma(x)-1}u\varphi dx~\text{for every}~\varphi\in X_0.
		\end{align}\qed

	\section*{Acknowledgements}
	  Choudhuri thanks the Department of Mathematics, NIT Rourkela for the basic facilities received and acknowledges the financial assitanceship from {\em Council of Scientific and Industrial Research - India} (25(0292)/18/EMR-II) and {\em Science and Engineering Research board (MATRICS scheme) - India} (MTR/2018/000525).
 Repov\v{s} acknowledges the support by the {\sl Slovenian Research Agency} through
  grants P1-0292, N1-0114, N1-0083, N1-0064, and J1-8131.
We thank the referees for their constructive comments and suggestions.


\begin{thebibliography}{10}
		
		\bibitem{2}
		G. Autuori, A. Fiscella and P. Pucci,
		\newblock Stationary Kirchhoff problems involving a fractional elliptic operator and a critical nonlinearity,
		\newblock {\em Nonlinear Anal.},
		125, 699--714, 2015.			
		
		\bibitem{avci1}
		M. Avci,
		\newblock Existence and multiplicity of solutions for Dirichlet problems involving the $p(x)$-Laplace operator,
		\newblock {\em Electron. J. Differential Equations},
		2013(14), 9 pp., 2013. 
		
		\bibitem{m4} E. Azroul, A. Benkirane, M. Shimi and M. Srati,
		\newblock On a class of fractional $p(x)$-Kirchhoff type problems,
		\newblock {\em Appl. Anal.},
	    100(2), 383--402, 2021.

		\bibitem{4}
		B. Barrios, I. De Bonis, M. Medina and I. Peral,
		\newblock Semilinear problems for the fractional Laplacian with a singular nonlinearity,
		\newblock {\em Open Math.}, 13(1), 390--407, 2015.		
	
		\bibitem{Brasco}
		L. Brasco, E. Lindgren and E. Parini, \newblock  The fractional Cheeger problem, \newblock {\em Interfaces Free
			Bound.}, 16(3), 419--458, 2014.		

		\bibitem{6}
		H. Br\'{e}zis,
		\newblock Functional Analysis, Sobolev spaces and Partial differential equations,
		\newblock {\em Universitext},
		Springer, New York, 2011.
		
		\bibitem{brezis1983relation}
		H. Br\'{e}zis and E. Lieb,
		\newblock A relation between pointwise convergence of functions and convergence
		of functionals,
		\newblock {\em Proc. Amer. Math. Soc.},
		88(3), 486--490, 1983.		

		\bibitem{Canino2017}
		A. Canino, L. Montoro, B. Sciunzi and M. Squassina,
		\newblock  {Nonlocal problems with singular nonlinearity},
		\newblock {\em Bull. Sci. Math.},
		141(3), 223--250, 2017.
		
	\bibitem{carrier}
G.F. Carrier,
\newblock On the non-linear vibration problem of the elastic string,
\newblock {\em Quart. Appl. Math.}, 3, 157--165, 1945.
	
		\bibitem{4'}
		Y. Chen, S. Levine and M. Rao,
		\newblock Variable exponent, linear growth functionals in image restoration,
		\newblock{\em  SIAM J. Appl. Math.},
		66, 1383--1406, 2006.
		
		\bibitem{chou_saoud}
		D. Choudhuri and K. Saoudi,
		\newblock Existence of multiple solutions to Schr\"{o}dinger-Poisson system in a nonlocal set up in $\mathbb{R}^3$,
		\newblock{\em  Z.  Angew. Math. Phys.},
		73(1), Art. No. 33, 17 pp., 2022.

	\bibitem{clark1972variant}
		D. Clark,
		\newblock A variant of the Lusternik-Schnirelman theory,
		\newblock {\em Indiana Univ. Math.}, 22(1),65--74, 1972.		
				
		\bibitem{11}
		E. Di Nezza, G. Palatucci and E. Valdinoci,
		\newblock Hitchhiker's guide to the fractional Sobolev spaces,
		\newblock {\em Bull. Sci. Math.},
		136(5), 521-573, 2012.

		\bibitem {fisc1}
		A. Fiscella,
		\newblock A fractional Kirchhoff problem involving a singular term and a critical nonlinearity,
		\newblock {\em Adv. Nonlinear Anal.},
		8(1), 645--660, 2019.
			
		\bibitem{pos_ghosh}
		S. Ghosh and D. Choudhuri,
		\newblock Existence of infinitely many solutions for a nonlocal elliptic PDE involving singularity,
		\newblock{\em Positivity,}
24, 463--479, 2020.			
			
		\bibitem{sekhar1}
		S. Ghosh and D. Choudhuri,
		\newblock Existence of at least $k$ solutions to a fractional $p$-Kirchhoff problem involving singularity and critical exponent,
		\newblock{\em arXiv:2007.02345,}
		2020.	
			
		\bibitem{Sekhar2020}
		S. Ghosh, D. Choudhuri and R.K. Giri,
		\newblock {Infinitely many small solutions to an elliptic PDE of variable exponent with a singular nonlinearity},
		\newblock {\em Complex Var. Elliptic Equ.},
	66(11), 1797--1817, 2021.	
	
	\bibitem{Gu2018aml}
		G. Gu, W. Zhang and F. Zhao,
		\newblock Infinitely many positive solutions for a nonlocal problem,
		\newblock{\em Appl. Math. Lett.}, 84, 49--55, 2018.
		
\bibitem{HK}
K. Ho and Y.H. Kim,
\newblock The concentration-compactness principles for $W^{s,p(x,y)}(\mathbb R^N)$ and application,
	\newblock{\em Adv. Nonlinear Anal.},
10, 816--848, 2021.
				
		\bibitem{kajikiya2005}
		R. Kajikiya,
		\newblock A critical point theorem related to the symmetric mountain pass lemma and its applications to elliptic equations,
		\newblock {\em J. Funct. Anal.},
		225(2), 352--370, 2005.
		
		\bibitem{khiddi1}
		M. Khiddi and S.M. Sbai,
		\newblock Infinitely many solutions for non-local elliptic non-degenerate $p$-Kirchhoff equations with critical exponent,
		\newblock {\em Complex Var. Elliptic Equ.},
		 65(3), 368--380, 2020.		
		
		\bibitem{17}
		C.Y. Lei, J.F. Liao and C.L. Tang,
		\newblock Multiple positive solutions for Kirchhoff type of problems with singularity and critical exponents,
		\newblock {\em J. Math. Anal. Appl.},
		421(1), 521--538, 2015.
		
		\bibitem{18}
		J.F. Liao, X.F. Ke, C.Y. Lei and C.L. Tang,
		\newblock A uniqueness result for Kirchhoff type problems with singularity,
		\newblock {\em Appl. Math. Lett.},
		59, 24--30, 2016.
		
		\bibitem{19}
		J.F. Liao, P. Zhang, J. Liu and C.L. Tang,
		\newblock Existence and multiplicity of positive solutions for a class of Kirchhoff type problems with singularity,
		\newblock {\em J. Math. Anal. Appl.},
		430(2), 1124--1148, 2015.
		
		\bibitem{21}
		X. Liu and Y. Sun,
		\newblock Multiple positive solutions for Kirchhoff type problems with singularity,
		\newblock {\em Commun. Pure Appl. Anal.},
		12(2), 721--733, 2013.				

		\bibitem{20}
		R.Q. Liu, C.L. Tang, J.F. Liao and X.P. Wu,
		\newblock Positive solutions of Kirchhoff type problem with singular and critical nonlinearities in dimension four,
		\newblock {\em Commun. Pure Appl. Anal.},
		15(5), 1841--1856, 2016.
		
		
		\bibitem{22}
		G. Molica Bisci, V.D. R\u{a}dulescu and R. Servadei,
		\newblock Variational Methods for Nonlocal Fractional Problems,
		\newblock {\em Encyclopedia of Mathematics and its Applications},
		Cambridge University Press, Cambridge, 2016.	
		
		\bibitem{repovs_et_al}
		N.S. Papageorgiou, V.D. R\u{a}dulescu and D.D. Repov\v{s},
		\newblock Nonlinear Analysis - Theory and Methods,
		\newblock { Springer, Cham}, 2019.		

		\bibitem{24}
		P. Pucci and S. Saldi,
		\newblock Critical stationary Kirchhoff equations in $\mathbb{R}^N$ involving nonlocal operators,
		\newblock {\em Rev. Mat. Iberoam.},
		32(1), 1--22, 2016.
		
		\bibitem{25}
		P. Pucci, M. Xiang and B. Zhang,
		\newblock Existence and multiplicity of entire solutions for fractional $p$-Kirchhoff equations,
		\newblock {\em Adv. Nonlinear Anal.},
		5(1), 27--55, 2016.

		\bibitem{Rabinowitz1986}
		P.H. Rabinowitz,
		\newblock Minimax methods in critical point theory with applications to differential equations,
		\newblock {\em CBMS Regional Conference Series in Mathematics},
		65, Americal Mathematical Society, Providence, RI, 1986.

		\bibitem{RD}
		V.D. R\v{a}dulescu and D.D. Repov\v{s},
		\newblock  Partial Differential Equations with Variable Exponents: Variational Methods and Qualitative Analysis,
		Chapman and Hall\ /CRC, Taylor \& Francis Group,  Boca Raton, FL, 2015.
		
		\bibitem{SGS}
      S.G. Samko and B. Ross,
	\newblock Integration and differentiation to a variable fractional order,
	\newblock {\em Integral Transform. Spec. Funct.},
	1(4), 277--300, 1993.	
	
		\bibitem{wang_zhang_1}
		L. Wang and B. Zhang,
\newblock Infinitely many solutions for Kirchhoff-type variable-order fractional Laplacian problems involving variable exponents,
\newblock {\em Appl. Anal.},
100(11), 2418--2435, 2021.

		\bibitem{mingqi2}
		M. Xiang, D. Hu, B. Zhang and Y. Wang,
		\newblock Multiplicity of solutions for variable-order fractional Kirchhoff equations with nonstandard growth,
		\newblock {\em J. Math. Anal. Appl.},
		501(1), Art. No. 124269, 19 pp., 2021.
		
		\bibitem{29}
		M. Xiang, B. Zhang and H. Qiu,
		\newblock Existence of solutions for a critical fractional Kirchhoff type problem in $\mathbb{R}^N$,
		\newblock {\em Sci. China Math.},
		60(9), 1647--1660, 2017.
		
		\bibitem{mingqi1}
		M. Xiang, B. Zhang and D. Yang,
		\newblock Multiplicity results for variable-order fractional Laplacian equations with variable growth,
		\newblock {\em Nonlinear Analysis},
		178, 190--204, 2019.				

		\bibitem{5}
		B. Zhang, A. Fiscella and S. Liang,
		\newblock Infinitely many solutions for critical degenerate Kirchhoff type equations involving the fractional $p$-Laplacian,
		\newblock {\em Appl. Math. Optim.},
		80(1), 63--80, 2019.
		
\end{thebibliography}
\end{document}